\newtheorem{thm}{Theorem}[section]
\newtheorem{lem}[thm]{Lemma}
\newtheorem{prop}[thm]{Proposition}
\theoremstyle{definition}
\theoremstyle{remark}
\numberwithin{equation}{section}
\newcommand{\seq}[1]{\langle #1\rangle}
\begin{document}

\title[Groups all of whose undirected Cayley graphs are integral]{Groups all of whose undirected Cayley graphs are integral}%
\author{Alireza Abdollahi}%
\address{Department of Mathematics, University of Isfahan, Isfahan 81746-73441, Iran \\ and School of Mathematics, Institute for Research in Fundamental Sciences (IPM), P.O.Box: 19395-5746, Tehran, Iran }
\email{a.abdollahi@math.ui.ac.ir}%
\email{alireza\_abdollahi@yahoo.com}
\author{Mojtaba Jazaeri}%
\address{Department of Mathematics, University of Isfahan, Isfahan 81746-73441, Iran}
\email{seja81@sci.ui.ac.ir}
\email{seja81@gmail.com}
\thanks{This research was in part supported by a grant from IPM (No. 92050219)}%
\subjclass[2000]{05C50, 15A18, 05C25, 20D60}%
\keywords{Cayley graphs; Integral graphs; Finite groups}%

\begin{abstract}
Let $G$ be a  finite group, $S\subseteq G\setminus\{1\}$ be a set such that if $a\in S$, then $a^{-1}\in S$, where $1$ denotes the identity element of $G$. The undirected  Cayley graph $Cay(G,S)$ of $G$ over the set $S$ is the graph whose vertex set is $G$ and two
vertices $a$ and $b$ are adjacent whenever $ab^{-1}\in S$. The adjacency  spectrum of a graph is the multiset of
 all eigenvalues of the adjacency matrix of the graph. A graph is called integral whenever all adjacency  spectrum elements are  integers.
 Following Klotz and Sander, we call a group $G$  Cayley integral whenever all undirected Cayley graphs over $G$ are integral.
Finite abelian Cayley integral  groups are classified by Klotz and Sander as finite abelian  groups of exponent dividing $4$ or $6$. Klotz and Sander have proposed  the determination of all non-abelian Cayley integral groups.
 In this paper we complete the classification of finite Cayley integral  groups by proving that  finite non-abelian Cayley integral groups are the
  symmetric group $S_{3}$ of degree $3$,  $C_{3} \rtimes C_{4}$ and  $Q_{8}\times C_{2}^{n}$ for some integer $n\geq 0$, where $Q_8$ is the quaternion group of order $8$.
\end{abstract}
\maketitle
\section{\bf Introduction}
Let $G$ be a finite  group and  $S$ be a subset of $G\setminus \{1\}$ such that $S=S^{-1}$,
where $1$ is the identity element of $G$. The undirected Cayley graph $Cay(G,S)$ is the graph whose vertex set is $G$ and
two vertices $a,b\in G$ are adjacent whenever $ab^{-1}\in S$. The adjacency  spectrum of a graph is the multiset of all eigenvalues of  the adjacency matrix of the graph. A graph is called integral whenever all adjacency  spectrum elements are  integers. The question of ``which graphs are integral?'' was first proposed by Harary and Schwenk \cite{HS}.
Many research papers were written on integral graphs e.g. \cite{AANS,BCRSS,So}.
Cayley graphs which are integral were studied by many people (e.g. \cite{AV1,AV,AP,DKMS,KS,KS1}).
Following \cite{KS} we call a group $G$
Cayley integral whenever all undirected Cayley graphs over $G$ are integral. Finite abelian Cayley integral groups are classified in \cite{KS}: these are finite abelian
groups of exponent dividing $4$ or $6$. In
  \cite[p. 12, Problem 3]{KS} Klotz and Sander have proposed the problem of determination of all non-abelian Cayley integral groups.
The non-abelian Cayley integral groups of order at most $12$ found in  \cite[p. 12, Problem 3]{KS} are the symmetric group $S_3$ of degree $3$, the quaternion group $Q_8$ of order $8$, and the semidirect product $C_3$ by $C_4$ which is a non-abelian group of order $12$.
In this paper we complete the classification of finite Cayley integral groups by proving  the following.
\begin{thm} \label{main}
A finite non-abelian group is Cayley integral if and only if it is isomorphic to one of the following groups:
\begin{enumerate}
 \item the symmetric group $S_{3}$ of degree $3$,
 \item  $C_{3} \rtimes C_{4}=\langle x,y \;|\; x^3=y^4=1, x^y=x^{-1} \rangle$,
\item $Q_{8}\times C_{2}^{n}$ for some integer $n\geq 0$, where $Q_8$ is the quaternion group of order $8$.
 \end{enumerate}
\end{thm}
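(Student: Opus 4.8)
The plan is to prove the two implications separately, using throughout two hereditary properties of Cayley integrality that I would establish first. If $G$ is Cayley integral then so is every subgroup $H$, since for $T\subseteq H$ the graph $Cay(G,T)$ is a disjoint union of $[G:H]$ copies of $Cay(H,T)$; and so is every quotient $G/N$, since if $\overline T\subseteq G/N$ is symmetric with full preimage $T$ in $G$, then $Cay(G,T)$ is the blow-up of $Cay(G/N,\overline T)$ in which each vertex becomes $|N|$ vertices and each edge a complete bipartite graph, so its eigenvalues are $0$ and $|N|\mu$ for $\mu$ an eigenvalue of $Cay(G/N,\overline T)$; as such a $\mu$ is an algebraic integer, $|N|\mu\in\mathbb Z$ forces $\mu\in\mathbb Z$. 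Applying the first property to cyclic subgroups shows $Cay(C_m,\{g,g^{-1}\})=C_m$ is integral, so every element of a Cayley integral group has order in $\{1,2,3,4,6\}$ and $|G|$ is a $\{2,3\}$-number; applying it to abelian subgroups and invoking the Klotz--Sander classification shows every abelian subgroup has exponent dividing $4$ or $6$, so $G$ contains no $C_{12}$, i.e.\ no commuting pair of elements of orders $3$ and $4$. For the forward direction, $Q_8\times C_2^n$ is Cayley integral because the conjugacy class of any element $x$ is $\{x\}$ or $\{x,x^{-1}\}$, so every symmetric subset is a union of conjugacy classes; hence the eigenvalues of every Cayley graph are $\tfrac1{\chi(1)}\sum_{s\in S}\chi(s)$ over the irreducible characters $\chi$, and since those characters take values in $\{0,\pm1,\pm2\}$ with the degree-$2$ ones vanishing off the centre, all these sums are integers. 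That $S_3$ and $C_3\rtimes C_4$ are Cayley integral I would verify by a direct (finite) check over all symmetric subsets.

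The engine of the converse is a short supply of small groups that are provably \emph{not} Cayley integral and hence, by heredity, cannot be sections of a Cayley integral group. The basic device: if $s,t$ are involutions with $\langle s,t\rangle$ dihedral of order $2m$, then $Cay(\langle s,t\rangle,\{s,t\})\cong C_{2m}$, which is non-integral once $m\ge4$; thus $D_8$ (via $C_8$) and $D_{12}=S_3\times C_2$ (via $C_{12}$) are excluded, the other dihedral groups of this shape already being forbidden by the element-order restriction. A handful of further groups are killed by exhibiting one Cayley graph with an irrational eigenvalue, obtained by computing $\sum_{s\in S}\rho(s)$ for the relevant low-dimensional irreducible representation: $Cay(C_4\rtimes C_4,\{b,b^{-1},ab,(ab)^{-1}\})$ has eigenvalue $2\sqrt2$; the minimal non-Dedekind $2$-group $(C_4\times C_2)\rtimes C_2$ of order $16$ has a $3$-valent Cayley graph with eigenvalue $\sqrt5$; $Cay(A_4,\{(123),(132),(12)(34),(13)(24)\})$ has eigenvalue $\tfrac{-1+\sqrt{17}}2$; and likewise $SL(2,3)$ and the extraspecial group of order $27$ and exponent $3$ are not Cayley integral.

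If $G$ is a non-abelian Cayley integral $2$-group, then $G$ has exponent dividing $4$, and were $G$ not a Dedekind group it would contain a minimal non-Dedekind subgroup; the minimal non-Dedekind $2$-groups of exponent $\le4$ form a short explicit list ($D_8$, $C_4\rtimes C_4$, $(C_4\times C_2)\rtimes C_2$, and any remaining small ones), each non-Cayley-integral by the previous paragraph, so $G$ is Dedekind. By the Baer--Dedekind theorem a non-abelian Dedekind group is $Q_8\times C_2^n\times A$ with $A$ abelian of odd order, which for a $2$-group forces $G\cong Q_8\times C_2^n$. Now suppose $G$ is non-abelian Cayley integral with $3\mid|G|$, chosen of least order among such, so the theorem already holds for smaller Cayley integral groups. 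A Sylow $3$-subgroup $P_3$ has exponent dividing $3$; if $|P_3|\ge9$ then by minimality applied to the proper subgroup $P_3$, together with the non-Cayley-integrality of the extraspecial group of order $27$, $P_3$ is elementary abelian, and then combining it with an involution of $G$ produces a non-Cayley-integral section (a generalized dihedral group on $C_3^2$, or $A_4=C_2^2\rtimes C_3$ from a nontrivial action on an $\mathbb F_4$-submodule), a contradiction; hence $P_3\cong C_3$.

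Next, $P_3\trianglelefteq G$: otherwise the normal-$p$-complement theorems, the non-Cayley-integrality of $A_4$, $SL(2,3)$ and $S_4$, and minimality force $N_G(P_3)/C_G(P_3)\cong C_2$ and make $G/\mathrm{core}_G(N_G(P_3))$ a non-abelian Cayley integral group with $3$-torsion acting faithfully and transitively on the $n_3\ge4$ Sylow $3$-subgroups; by minimality that quotient is $S_3$ or $C_3\rtimes C_4$, neither of which has a faithful transitive permutation representation of degree $\equiv1\pmod3$ and $\ge4$ — a contradiction. So $P_3=\langle u\rangle\trianglelefteq G$, $G=\langle u\rangle\rtimes P$ with $P$ a Sylow $2$-subgroup, and $P_0=\ker(P\to\operatorname{Aut}\langle u\rangle)$ satisfies: $\langle u\rangle\times P_0$ is Cayley integral, so $P_0$ has no element of order $4$ and, being a Cayley integral $2$-group, is elementary abelian; if $P_0=P$ then $G=\langle u\rangle\times P$ is abelian, so $[P:P_0]=2$. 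By the $2$-group case $P$ is abelian or $Q_8\times C_2^n$, and the latter has no index-$2$ elementary abelian subgroup, so $P$ is abelian, hence $P\cong C_2^{m}$ or $C_4\times C_2^{m-1}$; correspondingly $G\cong S_3\times C_2^{m-1}$ or $(C_3\rtimes C_4)\times C_2^{m-1}$, and since $D_{12}$ occurs as a subgroup of $S_3\times C_2$ and as a quotient of $(C_3\rtimes C_4)\times C_2$ and is not Cayley integral, we get $m=1$, i.e.\ $G\cong S_3$ or $G\cong C_3\rtimes C_4$. The hard part of the whole argument is exactly this last, purely group-theoretic stage: pinning down the complete short list of forbidden sections and, more delicately, the Sylow analysis forcing the Sylow $3$-subgroup to be normal of order $3$, since $n_3$ a priori ranges over $\{1,4,16,\dots\}$ and each value must be ruled out through the faithful transitive action it induces and the corresponding forbidden section.
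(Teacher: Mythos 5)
Your architecture is reasonable and your quotient-heredity lemma (the blow-up argument giving eigenvalues $0$ and $|N|\mu$) is a genuine addition to the paper's toolkit --- it lets you dispose of $(C_3\rtimes C_4)\times C_2$ via its quotient $D_{12}$, where the paper needs a separate computation. But the converse direction has concrete gaps that the paper's proof is specifically engineered to close and that your outline does not.

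First, your list of forbidden sections is incomplete in a way that breaks the Sylow-$3$ analysis. The group $S_3\times C_3$ has every proper subgroup and every proper quotient Cayley integral (subgroups: $S_3$, $C_3\times C_3$, $C_6$, \dots; quotients: $S_3$, $C_6$, \dots), all element orders in $\{1,2,3,6\}$, and all abelian subgroups of exponent dividing $6$ --- so none of your hereditary tools touch it, yet it is not Cayley integral (the paper exhibits a $7$-element symmetric set whose Cayley graph has eigenvalues satisfying $x^2+3x-1=0$). This configuration arises unavoidably when $|P_3|\ge 9$: an involution may invert one $C_3$ factor and centralize another, which is neither your generalized dihedral group on $C_3^2$ nor anything containing $A_4$. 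Without an independent proof that $S_3\times C_3$ is not Cayley integral, your contradiction at the step ``$|P_3|\ge 9$'' does not go through. Second, your $2$-group argument rests on the assertion that the minimal non-Dedekind $2$-groups of exponent dividing $4$ are $D_8$, $C_4\rtimes C_4$, $(C_4\times C_2)\rtimes C_2$ ``and any remaining small ones''; that hedge is doing real work. The group $\langle x,y \mid x^4=y^4=[x,y]^2=[x^2,y]=[x,y^2]=1\rangle$ of order $32$ is minimal non-abelian (all three maximal subgroups are abelian), hence minimal non-Dedekind of exponent $4$, and is missing from your list; one must also rule out minimal non-Dedekind groups whose maximal subgroups contain $Q_8$. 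The paper sidesteps this classification entirely by bounding $|\langle a,g\rangle|\le 32$ directly from $|H'|=2$ and $H/H'$ two-generated of exponent dividing $4$, then checking the handful of candidates. Third, your proof that $P_3\trianglelefteq G$ applies minimality to $G/\mathrm{core}_G(N_G(P_3))$, which is vacuous when the core is trivial; and more fundamentally, when $G=\langle a,b\rangle$ with $o(a)=4$, $o(b)=3$ non-commuting, you have no mechanism to identify this group --- a priori it is a quotient of the infinite free product $C_4\ast C_3$. The paper's Von Dyck argument (imposing the relations forced by the order restrictions, $[x^2,y]=1$, $(xy)^i=1$, $[x,y]^j=1$, etc., to land in an explicit finite list of groups $G_{i,j}$) is exactly the finiteness step your outline lacks, and I see no way to recover $\langle a,b\rangle\cong C_4\ltimes C_3$ from heredity and minimality alone.
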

So combining the above mentioned result of Klotz and Sander and Theorem \ref{main}, the classification of finite Cayley integral groups completes as follows.
\begin{thm}
A finite group is Cayley integral if and only if it is  isomorphic to one of the following groups:
\begin{enumerate}
\item an abelian group of exponent dividing $4$ or $6$,
 \item the symmetric group $S_{3}$ of degree $3$,
 \item  $C_{3} \rtimes C_{4}=\langle x,y \;|\; x^3=y^4=1, x^y=x^{-1} \rangle$,
\item $Q_{8}\times C_{2}^{n}$ for some integer $n\geq 0$, where $Q_8$ is the quaternion group of order $8$.
 \end{enumerate}
\end{thm}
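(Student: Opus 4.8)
The plan is to prove the harder direction of Theorem~\ref{main} --- that a finite non-abelian Cayley integral group must be one of the three listed --- and then to check the (easier) converse. I would organize the argument around two features of the Cayley integral property: it is inherited by subgroups, and it is inherited by quotients, since a Cayley graph of a subgroup $H\le G$ on a symmetric set $S\subseteq H$ is a disjoint union of copies of $Cay(H,S)$, and a Cayley graph of $G/N$ lifts to a Cayley graph of $G$ whose spectrum is a dilation of the quotient's spectrum. In particular every subgroup and every quotient of a Cayley integral group is again Cayley integral, so the class is closed under these operations; this lets me argue by looking at small subgroups and then bootstrapping.

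\medskip
\noindent\textbf{Step 1: bounding element orders and excluding small groups.}
First I would establish that in a Cayley integral group every element has order in $\{1,2,3,4,6\}$. This follows by restricting to the cyclic subgroup $\langle a\rangle$: a cyclic group is Cayley integral only if its order divides $4$ or $6$ (this is the abelian classification of Klotz--Sander applied to $C_n$), since e.g.\ the cycle $C_5$ and more generally $Cay(C_n,\{\pm1\})$ fails to be integral unless $n\in\{1,2,3,4,6\}$. Next I would compute, or invoke from the literature, the spectra of Cayley graphs of the smallest non-abelian candidates to rule out obstructions: the key small ``forbidden'' groups are the dihedral group $D_4$ of order $8$ (some Cayley graph of $D_4$ is non-integral), the dihedral group $D_6$ of order $12$, the alternating group $A_4$, and in fact any non-abelian group of order $pq$ other than $S_3$. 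So I would show: if $G$ is non-abelian Cayley integral then $G$ has no subgroup isomorphic to $D_4$, and $3$-elements and $2$-elements interact very rigidly.

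\medskip
\noindent\textbf{Step 2: the structure of the Sylow subgroups.}
Using Step~1, a Cayley integral group is $\{2,3\}$-generated in the sense that $|G|=2^a3^b$. I would then treat the Sylow $2$-subgroup $P$: it is a Cayley integral $2$-group, every element has order dividing $4$, and it contains no $D_4$; a short group-theoretic argument (analyzing the possibilities for a $2$-group of exponent $\le 4$ without a dihedral subgroup of order $8$) forces $P$ to be either elementary abelian or isomorphic to $Q_8\times C_2^n$. Symmetrically, a Cayley integral $3$-group has exponent $3$ and no non-integral Cayley graph; one shows it must be cyclic of order $3$ (or trivial), since the Heisenberg group of order $27$ and $C_3\times C_3$ with a suitable connection set already fail --- here I would exhibit an explicit non-integral Cayley graph on $C_3\times C_3$, e.g.\ on a connection set of size $4$ whose eigenvalues involve $\sqrt{-3}$ style algebraic numbers, wait, more carefully one checks $C_3\times C_3$ itself is actually Cayley integral as an abelian group of exponent $3\mid 6$, so the obstruction must come from the \emph{non-abelian} interaction: if a Sylow $3$-subgroup has order $\ge 9$ together with $2$-elements acting, integrality fails. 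So the cleaner statement is: the Sylow $3$-subgroup is trivial or $C_3$, and if it is $C_3$ it is not normal only in the $S_3$ case.

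\medskip
\noindent\textbf{Step 3: assembling the global structure.}
With Sylow data in hand I would split into cases according to $|G|_3\in\{1,3\}$. If $3\nmid|G|$, then $G=P$ is a $2$-group, hence $G\cong Q_8\times C_2^n$ by Step~2 (the elementary abelian case is abelian, excluded). If $3\mid|G|$ with Sylow $3$-subgroup $C_3$: a counting/transfer argument (Burnside normal $p$-complement, or direct analysis) shows the Sylow $2$-subgroup $P$ is normal, and one analyzes the action of $C_3$ on $P$. Since $C_3$ acts coprimely on $P$ and must do so in a way keeping all Cayley graphs integral, $P$ is forced to be small: $P\in\{1,C_2,C_4,C_2^2\}$ roughly, and the faithful actions surviving the integrality constraint are exactly those giving $S_3$ ($P=1$... no, $S_3$ has $|G|=6$, $P=C_2$, $C_3$ not normal) and $C_3\rtimes C_4$ ($P=C_4$ acting through its quotient $C_2$). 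The main obstacle, I expect, is Step~2--Step~3: one must rule out infinitely many potential $2$-groups and all the ``mixed'' extensions cleanly, and the tool for that is a good supply of explicit non-integral Cayley graphs plus the eigenvalue formula $\lambda_\chi=\frac{1}{\chi(1)}\sum_{s\in S}\chi(s)$ for normal (union-of-conjugacy-classes) connection sets, using that such eigenvalues are algebraic integers and are rational iff they are Galois-stable; the heart of the matter is showing that once $P$ is larger than $C_2^n$ in the $S_3$-like case, or once there is more than one $C_2$-chief factor on which $C_3$ acts nontrivially, some connection set produces an irrational eigenvalue.

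\medskip
\noindent\textbf{Step 4: the converse.}
Finally I would verify that each of $S_3$, $C_3\rtimes C_4$, and $Q_8\times C_2^n$ is genuinely Cayley integral. For $S_3$ and $C_3\rtimes C_4$ this is a finite check over all symmetric connection sets (already noted in \cite{KS}). For $Q_8\times C_2^n$ one argues structurally: $Q_8$ is Cayley integral, $C_2^n$ is Cayley integral as an abelian group of exponent $2\mid 4$, and one shows the Cayley integral property is preserved under taking a direct product with an elementary abelian $2$-group --- essentially because every conjugacy class of $Q_8\times C_2^n$ is a single element times a subgroup-coset pattern and every irreducible character is a product $\chi\otimes\lambda$ with $\lambda$ linear of order $\le 2$, so $\lambda_\chi$ stays a rational algebraic integer. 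That preservation lemma, stated and proved for $Q_8$ specifically, closes the theorem.
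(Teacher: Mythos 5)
Your overall architecture (subgroup closure, restricting element orders to $\{1,2,3,4,6\}$, Sylow analysis, plus a stock of explicit non-integral Cayley graphs) matches the paper's, but two of your key reductions do not go through as stated. In Step 2 you claim that a $2$-group of exponent at most $4$ with no subgroup isomorphic to $D_{8}$ must be elementary abelian or $Q_{8}\times C_{2}^{n}$. This is false: $C_{4}\rtimes C_{4}=\langle x,y\mid x^4=y^4=1,\ x^y=x^{-1}\rangle$, the order-$32$ group $(C_4\times C_2)\rtimes C_4$, and even $Q_8\times C_4$ all have exponent $4$ and all of their involutions are central (so they contain no $D_8$), yet none is on the list. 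The paper's route is different and essentially forced: one shows every cyclic subgroup of a Cayley integral $2$-group is normal and invokes the Dedekind--Baer classification of Hamiltonian groups; the proof that an order-$4$ cyclic subgroup cannot fail to be normal requires explicitly exhibiting non-integral Cayley graphs on $C_4\rtimes C_4$ and on $(C_4\times C_2)\rtimes C_4$ (found by computer). Without those two exclusions, or the Dedekind--Baer step, your Step 2 has a genuine hole. (Relatedly, your claim that a Cayley integral $3$-group is trivial or $C_3$ is wrong as stated -- every elementary abelian $3$-group is Cayley integral; the bound $|Q|=3$ only emerges after the non-abelian interaction with $2$-elements is used, via the non-integrality of $S_3\times C_3$ and $(C_3\times C_3)\rtimes C_4$.)

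The second gap is in Step 3. The group $SL(2,3)$ passes every soft test you propose: its element orders are $\{1,2,3,4,6\}$, its unique involution is central, its Sylow $2$-subgroup is $Q_8$ (Cayley integral) and its Sylow $3$-subgroup is $C_3$, and it contains no $D_8$ and no $A_4$. Your outline gives no mechanism to eliminate it; the paper must produce an explicit connection set of size $4$ on $SL(2,3)$ whose Cayley graph has irrational eigenvalues (roots of $x^2-x-4$), and then pins down $\langle a,b\rangle$ for $o(a)=4$, $o(b)=3$ via Von Dyck's theorem and an enumeration of quotients of finitely presented groups, ruling out $SL(2,3)$ and $C_2\times SL(2,3)$ to conclude $\langle a,b\rangle\cong C_4\ltimes C_3$. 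Also note that your transfer-argument claim that the Sylow $2$-subgroup is normal is false in both surviving mixed-order groups: neither $S_3$ nor $C_3\rtimes C_4$ has a normal Sylow $2$-subgroup (it is the Sylow $3$-subgroup that is normal in $C_3\rtimes C_4$). Your Step 4 (the converse), by contrast, is essentially the paper's argument for $Q_8\times C_2^n$ via the character formula for conjugation-invariant connection sets, and is fine.
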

Throughout we denote by $C_n$ the cyclic group of order $n$, the dihedral group of order $2n$ is denoted by $D_{2n}$, the alternating group of degree $4$ is denoted by $A_4$; we denote by $S_3$ and $S_4$ the symmetric groups of degree $3$ and $4$, respectively, and $C_n^k$ denotes the direct product $\underbrace{C_n\times \cdots \times C_n}_{k-\text{times}}$.
The semidirect product of a group $N$ by a group $K$ is denoted by $K\ltimes N$ or $N\rtimes K$ which is a (not necessarily unique) group $G$   containing a normal subgroup $N_1$ isomorphic to $N$ and a subgroup $K_1$ isomorphic to $K$ such that $G=N_1K_1$ and $N_1\cap K_1=1$.
 For any two elements $x,y$ of a group $G$ we denote by $[x,y]$ the commutator $x^{-1}y^{-1}xy$. For a free group $F$ generated by free generators $x_1,\dots,x_n$ and  elements $r_1,\dots,r_m$, the factor group $G=\frac{F}{\langle r_1,\dots,r_m \rangle^F}$  is denoted by
$$\langle x_1,\dots,x_n \;|\; r_1=\cdots=r_m=1\rangle, \;\;\; (*)$$ where $\langle r_1,\dots,r_m \rangle^F$ is the normal closure of the subgroup $\langle r_1,\dots,r_m \rangle$ in $F$.  The notation  $(*)$  is called the presentation of the group $G$ by generators $x_1,\dots,x_n$ and relations $r_1,\dots,r_m$.
\section {\bf Preliminaries}
In this section we state some facts which we need in the sequel.
The following result is the classification of  all undirected connected cubic Cayley integral graphs.
\begin{thm} [Theorem 1.1 of \cite{AV}] \label{AV}
There are exactly seven connected cubic integral Cayley graphs. In particular, for a finite group $G$ and a subset $1 \notin S=S^{-1}$ with three
elements, $Cay(G,S)$ is integral if and only if $G$ is isomorphic to one of the following groups:
\begin{equation*}
C_{2}^{2}, C_{4}, C_{6}, S_{3}, C_{2}^{3}, C_{2} \times C_{4},
 D_{8}, C_{2}\times C_{6}, D_{12}, A_{4}, S_{4}, D_{8} \times C_{3}, S_{3} \times C_{4}, A_{4}\times C_{2}.
\end{equation*}
\end{thm}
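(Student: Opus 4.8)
The plan is to combine the eigenvalue restrictions forced by $3$-regularity with a reduction to the connected case, and then to carry out a finite enumeration. First I would reduce to the situation $G=\langle S\rangle$: a general Cayley graph $Cay(G,S)$ is a disjoint union of $[G:\langle S\rangle]$ isomorphic copies of $Cay(\langle S\rangle, S)$, and a graph is integral exactly when each of its connected components is integral, so it suffices to classify the connected cubic integral Cayley graphs. Since $|S|=3$ and the inversion map $x\mapsto x^{-1}$ is an involution of $S$ whose fixed points are the involutions in $S$, a parity count shows $S$ contains an odd number of involutions, hence at least one. This leaves exactly two shapes: either $S=\{a,b,c\}$ with $a^2=b^2=c^2=1$, or $S=\{a,b,b^{-1}\}$ with $a^2=1$ and $\mathrm{ord}(b)\ge 3$.

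The spectral core of the argument is that a $3$-regular graph has all its eigenvalues in $[-3,3]$, so integrality confines the spectrum to $\{0,\pm1,\pm2,\pm3\}$. Writing $m_j$ for the multiplicity of the eigenvalue $j$, connectedness gives $m_3=1$, and $m_{-3}=1$ precisely when the graph is bipartite. I would then read off the trace identities $\mathrm{tr}(A^k)=\sum_j j^k m_j$ for small $k$: for $k=0,1,2,3$ these say $\sum_j m_j=|G|$, $\sum_j j\,m_j=0$, $\sum_j j^2 m_j=3|G|$ (twice the edge count), and $\sum_j j^3 m_j=6t$ with $t$ the number of triangles, while the $k=4$ identity counts closed $4$-walks and brings in the number of $4$-cycles. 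Because every $m_j$ and every walk count is a non-negative integer, these linear and integrality constraints leave only finitely many admissible multiplicity vectors and, crucially, bound $|G|$ from above.

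From the finite list of feasible spectra I would reconstruct the graphs themselves; this recovers the classification of connected cubic integral graphs, which is known to have exactly thirteen members. The final step is to decide which of these thirteen are Cayley graphs and to identify all groups realizing them. Here I would use that a graph on $n$ vertices is a Cayley graph of $G$ if and only if its automorphism group contains a subgroup isomorphic to $G$ acting regularly on the vertices; vertex-transitivity is a necessary first filter, and for the surviving graphs one locates the regular subgroups inside the (explicitly computable) automorphism groups. Matching each graph to the groups acting regularly on it produces the stated list. The fact that a single cubic integral graph can be a Cayley graph of several different groups—for example $K_4$ for both $C_2^2$ and $C_4$, and $K_{3,3}$ for both $C_6$ and $S_3$—accounts for the gap between seven graphs and fourteen groups. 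The hard part will be precisely this identification: enumerating all the relevant regular subgroups without omission and confirming that no group outside the list occurs, which is most laborious for the four examples of order $24$, namely $S_4$, $A_4\times C_2$, $D_8\times C_3$, and $S_3\times C_4$.
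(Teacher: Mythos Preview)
The paper does not prove this theorem at all: it is quoted verbatim as Theorem~1.1 of \cite{AV} and used as a black box throughout the rest of the argument. So there is no ``paper's own proof'' to compare your proposal against.

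That said, your outline is essentially the strategy of the original source \cite{AV}. One reduces to the connected case, observes that a symmetric $3$-element set must contain an involution, confines the spectrum to $\{0,\pm1,\pm2,\pm3\}$ by $3$-regularity, and invokes the Bussemaker--Cvetkovi\'c/Schwenk classification of the thirteen connected cubic integral graphs. The remaining work---deciding which of the thirteen are vertex-transitive, and for those, enumerating all regular subgroups of the automorphism group---is exactly where \cite{AV} spends its effort, and your proposal correctly identifies this as the laborious part. Your sketch is sound as a plan; just be aware that the trace-identity step alone does not independently reproduce the list of thirteen graphs (that classification is itself a nontrivial input), and that ruling out the non-Cayley graphs among the thirteen requires more than vertex-transitivity, since one must actually exhibit or exclude a regular subgroup.
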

\begin{lem}
Let $G$ be a finite Cayley integral group. Then every subgroup of $G$ is also Cayley integral.
\end{lem}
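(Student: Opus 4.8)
The plan is to show that for an arbitrary subgroup $H\le G$ and an arbitrary admissible connection set $S\subseteq H\setminus\{1\}$ with $S=S^{-1}$, the graph $Cay(H,S)$ is integral, by realizing it as a ``building block'' of $Cay(G,S)$, whose integrality is guaranteed by the hypothesis that $G$ is Cayley integral.

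First I would observe that such an $S$, being a symmetric subset of $H\setminus\{1\}\subseteq G\setminus\{1\}$, is also an admissible connection set for $G$, so $Cay(G,S)$ is defined and integral. Next I would analyze its structure: if $a,b\in G$ are adjacent in $Cay(G,S)$, then $ab^{-1}\in S\subseteq H$, hence $Ha=Hb$. Thus every edge of $Cay(G,S)$ joins two vertices lying in a common right coset of $H$, and $Cay(G,S)$ is the disjoint union of the subgraphs induced on the right cosets $Hg$, as $g$ ranges over a transversal of $H$ in $G$.

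Then I would check that each such induced subgraph is isomorphic to $Cay(H,S)$. Fixing $g$, the bijection $hg\mapsto h$ from $Hg$ onto $H$ sends a pair $\{h_1g,h_2g\}$ to $\{h_1,h_2\}$, and $(h_1g)(h_2g)^{-1}=h_1h_2^{-1}$, so the pair $\{h_1g,h_2g\}$ is an edge of $Cay(G,S)$ exactly when $\{h_1,h_2\}$ is an edge of $Cay(H,S)$; hence this map is a graph isomorphism. Consequently, after a suitable relabeling of its vertices, the adjacency matrix of $Cay(G,S)$ is block-diagonal with $[G:H]$ diagonal blocks, each equal to the adjacency matrix of $Cay(H,S)$. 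Therefore the spectrum of $Cay(H,S)$ is contained (with multiplicities) in that of $Cay(G,S)$, which by hypothesis consists of integers; so $Cay(H,S)$ is integral. Since $H$ and $S$ were arbitrary, every subgroup of $G$ is Cayley integral.

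There is no real obstacle here; the only points requiring a moment's care are the direction of the coset decomposition (adjacency in $Cay(G,S)$ forces vertices into a common \emph{right} coset of $H$, because of the convention $ab^{-1}\in S$) and the verification that the coset-to-subgroup bijection preserves edges in both directions.
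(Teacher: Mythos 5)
Your proof is correct and rests on the same mechanism as the paper's: a Cayley graph whose connection set lies in a subgroup decomposes as a disjoint union of isomorphic copies of the Cayley graph on that subgroup, so the spectrum of $Cay(H,S)$ sits inside that of $Cay(G,S)$. The only cosmetic difference is that you decompose $Cay(G,S)$ directly into copies of $Cay(H,S)$ over right cosets of $H$, whereas the paper decomposes both $Cay(H,S)$ and $Cay(G,S)$ into copies of $Cay(\langle S\rangle,S)$; the argument is essentially identical.
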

\begin{proof}
Let $H$ be an arbitrary subgroup of $G$ and let $T\subseteq H\setminus \{1\}$ with $T=T^{-1}$. Consider the Cayley graph $Cay(H,T)$.
Note that $Cay(H,T)$ is isomorphic to a disjoint union of some $\Gamma=Cay(\langle T \rangle, T)$. Thus $Cay(H,T)$ is integral if and only if
 $\Gamma$ is integral. Now, since $Cay(G,T)$ is also a disjoint union of some $\Gamma$, it follows that $\Gamma$ is integral.
  Hence $H$ is a Cayley integral group. This completes the proof.
\end{proof}
The above lemma will be frequently used  in the sequel without further notice.
\begin{prop}[see \cite{DS} and \cite{Ram}, Proposition 6.3.1 of \cite{BH}] \label{BH1}
Let $G$ be a finite group and $S$ a subset that is inverse closed and invariant under conjugation. The graph $Cay(G,S)$ has eigenvalues
$\theta_{\chi}=\frac{1}{\chi(1)}\sum_{s\in S}\chi(s)$ with multiplicity $\chi(1)^{2}$, where $\chi$ ranges over the irreducible characters of $G$.
\end{prop}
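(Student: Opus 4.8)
The plan is to realise the adjacency matrix $A$ of $Cay(G,S)$ as the operator of left multiplication by the group-algebra element $\widehat{S}=\sum_{s\in S}s$ on the regular representation $\mathbb{C}[G]$, and then to diagonalise $A$ using the Wedderburn decomposition of $\mathbb{C}[G]$. Concretely, identify $\mathbb{C}[G]$ with $\mathbb{C}^{G}$ by $g\mapsto e_{g}$; then a one-line computation shows that in this basis $A$ is the matrix of left multiplication by $\widehat{S}$, since $A e_{b}=\sum_{a:\,ab^{-1}\in S}e_{a}=\sum_{s\in S}e_{sb}$. Note that the hypothesis $S=S^{-1}$ is exactly what makes $A_{a,b}=A_{b,a}$, so that $A$ is genuinely the (symmetric) adjacency matrix of an undirected graph and has real spectrum.

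Next I would exploit that $S$ is invariant under conjugation: then $\widehat{S}$ is a sum of full conjugacy-class sums, hence lies in the centre $Z(\mathbb{C}[G])$. By the Artin--Wedderburn theorem, $\mathbb{C}[G]\cong\bigoplus_{\chi\in\mathrm{Irr}(G)}M_{\chi(1)}(\mathbb{C})$, and accordingly the regular representation decomposes as $\mathbb{C}[G]\cong\bigoplus_{\chi}\chi(1)\,V_{\chi}$, where $V_{\chi}$ is the irreducible module affording $\chi$ and $\dim V_{\chi}=\chi(1)$. By Schur's lemma the central element $\widehat{S}$ acts on each $V_{\chi}$ as a scalar $\omega_{\chi}(\widehat{S})\cdot I$; taking traces on $V_{\chi}$ gives $\omega_{\chi}(\widehat{S})\,\chi(1)=\sum_{s\in S}\chi(s)$, so $\omega_{\chi}(\widehat{S})=\frac{1}{\chi(1)}\sum_{s\in S}\chi(s)=\theta_{\chi}$. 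Since $V_{\chi}$ occurs with multiplicity $\chi(1)$ in $\mathbb{C}[G]$ and $\widehat{S}$ acts on each such copy as the scalar $\theta_{\chi}$, the eigenvalue $\theta_{\chi}$ of $A$ occurs with multiplicity $\chi(1)\cdot\chi(1)=\chi(1)^{2}$; as $\sum_{\chi}\chi(1)^{2}=|G|$, this multiset of eigenvalues exhausts the spectrum of $A$, which completes the argument.

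I expect the only delicate points to be bookkeeping ones: pinning down conventions so that the adjacency matrix is indeed (left) multiplication by $\widehat{S}$ and recording where $S=S^{-1}$ enters (symmetry of $A$ and reality of the $\theta_{\chi}$), and being careful that the scalar by which a central element acts on an irreducible module is precisely the central-character value $\frac{1}{\chi(1)}\sum_{s\in S}\chi(s)$ rather than $\sum_{s\in S}\chi(s)$ itself. Everything else is the standard semisimplicity machinery for group algebras over $\mathbb{C}$, which is why in the paper this is simply quoted from \cite{DS}, \cite{Ram}, and \cite{BH}.
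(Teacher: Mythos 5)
Your argument is correct: the identification of the adjacency matrix with left multiplication by $\widehat{S}=\sum_{s\in S}s$, the observation that conjugation-invariance places $\widehat{S}$ in the centre of $\mathbb{C}[G]$, and the central-character/trace computation giving $\theta_{\chi}$ with multiplicity $\chi(1)^{2}$ are all sound, and you correctly locate where $S=S^{-1}$ is used. The paper itself offers no proof of this proposition --- it is quoted from the cited references --- and what you have written is precisely the standard argument found there, so there is nothing further to reconcile.
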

Note that for every finite group $G$, since  $\sum_{\chi \in \text{Irr}(G)}\chi(1)^2=|G|$ ($\text{Irr}(G)$ is the set of all irreducible characters of $G$), the multiset $\{\theta_{\chi} \;|\; \chi \in \text{Irr}(G)\}$ is the spectrum of $Cay(G,S)$ for any $S$ as in Proposition \ref{BH1}.\\
The following result has its own interest and we do not use it in the sequel but we would like to state it here!
\begin{prop}[See Theorem 2 of \cite{DKMS}]\label{DKMS} Let $G$ be a finite group and $S$ be a member of the boolean algebra generated
by the normal subgroups of $G$. Then the Cayley graph $Cay(G,S)$ is integral.
\end{prop}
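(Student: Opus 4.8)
The plan is to use Proposition \ref{BH1} to exhibit the whole spectrum explicitly and then prove integrality of each eigenvalue by two separate arguments. First one observes that every normal subgroup of $G$ is inverse-closed and invariant under conjugation, and that these two properties are preserved by the Boolean operations of union, intersection and complementation taken inside $G$; hence the set $S$, being a Boolean combination of normal subgroups, is itself inverse-closed and conjugation-invariant. Therefore Proposition \ref{BH1} applies and the spectrum of $Cay(G,S)$ is exactly the multiset $\{\theta_\chi : \chi\in\text{Irr}(G)\}$, where $\theta_\chi=\frac{1}{\chi(1)}\sum_{s\in S}\chi(s)$. It then suffices to show that each $\theta_\chi$ is both (i) an algebraic integer and (ii) a rational number, since a rational algebraic integer is an ordinary integer.

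For (i), consider $z_S=\sum_{s\in S}s$ as an element of the integral group ring $\mathbb{Z}[G]$. Since $S$ is conjugation-invariant, $z_S$ is a sum of conjugacy-class sums and so lies in the centre of $\mathbb{C}[G]$; by Schur's lemma it acts on the irreducible module affording $\chi$ as a scalar, and computing the trace of this action shows that the scalar equals $\theta_\chi$. As $\mathbb{Z}[G]$ is a finitely generated $\mathbb{Z}$-module, $z_S$ is integral over $\mathbb{Z}$, whence so is $\theta_\chi$.

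For (ii), I would reduce $\sum_{s\in S}\chi(s)$ to normal subgroups by inclusion--exclusion. Writing $S$ as a Boolean combination of normal subgroups $N_1,\dots,N_r$ and using repeatedly $\mathbf{1}_{A\cup B}=\mathbf{1}_A+\mathbf{1}_B-\mathbf{1}_A\mathbf{1}_B$, $\mathbf{1}_{A^{c}}=\mathbf{1}_G-\mathbf{1}_A$ and $\mathbf{1}_A\mathbf{1}_B=\mathbf{1}_{A\cap B}$, one expresses the indicator function $\mathbf{1}_S$ as an integer linear combination $\sum_T c_T\,\mathbf{1}_{M_T}$ in which each $M_T$ is an intersection of some of the $N_i$, hence again a normal subgroup of $G$ (with the convention that the empty intersection equals $G$). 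Then $\sum_{s\in S}\chi(s)=\sum_T c_T\sum_{m\in M_T}\chi(m)$, so it is enough to show that $\sum_{m\in N}\chi(m)\in\mathbb{Z}$ for every normal subgroup $N$. Decomposing $\chi|_N$ into irreducible characters of $N$ and applying the orthogonality relations for $N$ gives $\sum_{m\in N}\chi(m)=|N|\cdot\langle\chi|_N,1_N\rangle$, which is $|N|$ times the multiplicity of the trivial character of $N$ in $\chi|_N$, a non-negative integer. Hence $\sum_{s\in S}\chi(s)\in\mathbb{Z}$ and $\theta_\chi\in\mathbb{Q}$; combined with (i) this yields $\theta_\chi\in\mathbb{Z}$, so $Cay(G,S)$ is integral.

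The only delicate point --- and what I expect to be the main obstacle --- is that the normal-subgroup computation in (ii) cannot be applied to $S$ itself, since a union or a set-theoretic difference of normal subgroups need not even be a subgroup; it is precisely the inclusion--exclusion expansion, in which only intersections appear and an intersection of normal subgroups is again a normal subgroup, that makes the reduction legitimate. It is also worth emphasizing that neither half of the argument suffices alone: rationality without the group-ring argument, or algebraic integrality without the character computation, would leave the conclusion unproved.
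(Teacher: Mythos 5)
Your proof is correct, and its overall skeleton matches the paper's: both verify that $S$ is inverse-closed and conjugation-invariant so that Proposition \ref{BH1} applies, and both conclude by observing that a rational algebraic integer is an ordinary integer. The difference lies in how the key fact $\sum_{s\in S}\chi(s)\in\mathbb{Z}$ is obtained. The paper simply cites Corollary 4.2 of \cite{AP}, whereas you prove it from scratch: the inclusion--exclusion expansion of $\mathbf{1}_S$ into an integer combination of indicators of intersections of the $N_i$ (each intersection again a normal subgroup), followed by $\sum_{m\in N}\chi(m)=|N|\,\langle\chi|_N,1_N\rangle\in\mathbb{Z}$, is essentially the content of that citation, and your remark that only intersections survive the expansion is precisely the point that makes the reduction legitimate. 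Your argument for algebraic integrality via the central element $z_S$ of the group ring is also valid but more elaborate than needed: since $\theta_\chi$ is an eigenvalue of the integer adjacency matrix, it is automatically an algebraic integer, which is all the paper invokes. One further remark in your favour: the paper's phrasing (that $\sum_{s\in S}\chi(s)$ is an ``algebraic integer'') understates what the argument actually requires, namely that this sum be rational; your write-up isolates the rationality step explicitly and handles it correctly.
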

\begin{proof}
Since $S$ can be obtained by arbitrary finite intersections, unions, or complements of normal subgroups of $G$, $S$ is closed under conjugation as well as  inverse. Thus by Proposition \ref{BH1} eigenvalues of $Cay(G,S)$ are of the form $\theta_{\chi}=\frac{1}{\chi(1)}\sum_{s\in S}\chi(s)$ for $\chi\in\text{Irr}(G)$. Now, Corollary 4.2 of \cite{AP} shows that $\sum_{s\in S}\chi(s)$ is  an algebraic integer for each $\chi\in\text{Irr}(G)$. Since each eigenvalue of any graph is an algebraic integer, it follows that $\theta_{\chi}$ is integer. This completes the proof.
\end{proof}
\begin{lem} [Lemma 11 of \cite{KS}] \label{KS}
If $G$ is a Cayley integral group, then the order of every non-trivial element of $G$ belongs to $\{2,3,4,6\}$.
\end{lem}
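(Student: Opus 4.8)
The plan is to reduce the statement to cyclic groups and then analyse a single, completely explicit Cayley graph. Since $G$ is Cayley integral, the previous lemma shows that every cyclic subgroup of $G$ is Cayley integral; hence it suffices to prove that if $C_n=\langle g\rangle$ is Cayley integral, then $n\in\{1,2,3,4,6\}$. For $n\le 2$ there is nothing to prove, so assume $n\ge 3$ and take $S=\{g,g^{-1}\}$, an inverse-closed subset of $C_n\setminus\{1\}$ with exactly two elements because $g\ne g^{-1}$.

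Next I would observe that $Cay(C_n,S)$ is the cycle graph on $n$ vertices (it is connected and $2$-regular), so its spectrum is the classical multiset $\{\,2\cos(2\pi k/n)\;:\;k=0,1,\dots,n-1\,\}$. The same conclusion follows from Proposition~\ref{BH1}: the group $C_n$ is abelian, its irreducible characters are the linear characters $\chi_j$ with $\chi_j(g^k)=e^{2\pi i jk/n}$, and therefore $\theta_{\chi_j}=\chi_j(g)+\chi_j(g^{-1})=2\cos(2\pi j/n)$ for $j=0,1,\dots,n-1$. In either case, integrality of $Cay(C_n,S)$ forces in particular $2\cos(2\pi/n)\in\mathbb Z$.

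Finally I would carry out the arithmetic pinch. Because $\lvert 2\cos(2\pi/n)\rvert\le 2$, the only possibilities are $2\cos(2\pi/n)\in\{-2,-1,0,1,2\}$; each of these five values is attained by $2\cos(2\pi/n)$ for exactly one positive integer $n$ (the values $-2,-1,0,1$ correspond to $n=2,3,4,6$, and the value $2$ corresponds to $n=1$), so $n\in\{1,2,3,4,6\}$ and hence every non-trivial element of $G$ has order in $\{2,3,4,6\}$. Alternatively, $2\cos(2\pi/n)=\zeta_n+\zeta_n^{-1}$ generates the maximal real subfield of $\mathbb Q(\zeta_n)$, which has degree $\varphi(n)/2$ over $\mathbb Q$ for $n\ge 3$, so rationality of this number forces $\varphi(n)\le 2$, i.e.\ $n\in\{3,4,6\}$. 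I do not expect a genuine obstacle in this argument; the only point needing care is that one cleverly chosen Cayley graph already rules out every forbidden order, which is exactly what the presence of $2\cos(2\pi/n)$ in the spectrum of $Cay(C_n,S)$ guarantees.
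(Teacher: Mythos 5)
Your argument is correct and complete: reducing to cyclic subgroups via the preceding lemma, identifying $Cay(C_n,\{g,g^{-1}\})$ as the $n$-cycle, and pinching $2\cos(2\pi/n)\in\{-2,-1,0,1,2\}$ (or equivalently $\varphi(n)\le 2$) is exactly the standard proof of this fact. Note that the paper itself does not prove this lemma but quotes it as Lemma 11 of Klotz--Sander \cite{KS}, where essentially the same cycle-spectrum argument is used, so you have simply supplied the proof the paper outsources; no gap.
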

\begin{lem}\label{D8}
$D_{2n}$ is not a Cayley integral group for all integers $n\geq 4$.
\end{lem}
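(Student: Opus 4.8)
The plan is to exhibit, for every $n\geq 4$, a single inverse-closed subset $S\subseteq D_{2n}\setminus\{1\}$ for which $Cay(D_{2n},S)$ fails to be integral; this at once shows that $D_{2n}$ is not Cayley integral. Write $D_{2n}=\langle r,s\mid r^{n}=s^{2}=1,\ s^{-1}rs=r^{-1}\rangle$ and take $S=\{s,\,rs\}$. Both elements are involutions, so $S=S^{-1}$ and $1\notin S$; moreover $s(rs)=(sr)s=(r^{-1}s)s=r^{-1}$, so $\langle S\rangle$ contains $r$, hence $\langle S\rangle=D_{2n}$. Thus $S$ is an admissible connection set that generates the whole group.

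Since $|S|=2$ and $S$ generates $D_{2n}$, the graph $Cay(D_{2n},S)$ is a connected $2$-regular graph on $2n$ vertices, i.e. it is the cycle $C_{2n}$. The adjacency eigenvalues of $C_{2n}$ are $2\cos(\pi j/n)$ for $j=0,1,\dots,2n-1$. Taking $j=1$ gives the eigenvalue $2\cos(\pi/n)$, and for $n\geq 4$ we have $\pi/n\in(0,\pi/4]$, so $2\cos(\pi/n)\in[\sqrt2,2)$, an interval containing no integer. Hence $C_{2n}$ is not integral, so neither is $Cay(D_{2n},S)$, and therefore $D_{2n}$ is not Cayley integral.

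There is essentially no hard step here: the only points needing (routine) care are checking that the two chosen reflections generate the whole dihedral group, so that the Cayley graph is a single $2n$-cycle rather than a disjoint union of shorter cycles, together with the elementary observation that $C_{m}$ is integral only for small $m$. As an alternative route, for $n\geq 4$ with $n\notin\{4,6\}$ one could instead invoke Lemma \ref{KS} directly, since the rotation subgroup $\langle r\rangle\cong C_{n}$ then contains an element of order $n\notin\{2,3,4,6\}$; only the two cases $D_{8}$ and $D_{12}$ would remain, and the cycle argument above disposes of them uniformly.
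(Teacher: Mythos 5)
Your proof is correct and takes essentially the same route as the paper: the paper also takes the two reflections $S=\{b,ba\}$, observes that $Cay(D_{2n},S)$ is the $2n$-cycle, and concludes non-integrality. You simply fill in the details the paper leaves implicit (that $S$ generates the group, and the explicit eigenvalue $2\cos(\pi/n)\in[\sqrt2,2)$ witnessing non-integrality for $n\geq4$).
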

\begin{proof}
It is well-known that $D_{2n}=\seq{a,b|a^{n}=1, b^{2}=1, b^{-1}ab=a^{-1}}$. If $S=\{b,ba\}$, then $Cay(D_{2n},S)$ is a cycle on $2n$
vertices which is not an integral graph. Therefore $D_{2n}$ is not an integral group.
\end{proof}
\begin{lem} \label{A4}
The following groups are not Cayley integral.
\begin{enumerate}
\item The alternating group $A_4$ of degree $4$,
\item $C_{4}\rtimes C_{4}=\langle x,y \;|\; x^4=y^4=1,  x^y=x^{-1}  \rangle$,
\item $S_{3} \times C_{3}$,
\item the special linear group $SL(2,3)$ of $2\times 2$ matrices over the field of order $3$,
\item $(C_{4}\ltimes C_{3}) \times C_{2}$, where $C_{4}\ltimes C_{3}=\langle a,b \;|\; a^3=b^4=1, a^b=a^{-1} \rangle$,
\item $(C_4 \times C_2)\rtimes C_4=\langle x,y \;|\; x^4=y^4=[x,y]^2=[x^2,y]=[x,y^2]=1 \rangle$,
\item the non-abelian group of exponent $3$ and order $27$: $\langle x,y \;|\; x^3=y^3=(xy)^3=(xy^{-1})^3=1 \rangle$,
\item $(C_3\times C_3) \rtimes C_4=\langle x,y,z \;|\; x^3=y^3=z^4=[x,y]=1,x^z=x^{-1},y^z=y^{-1} \rangle$.
\end{enumerate}
\end{lem}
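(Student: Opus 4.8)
The plan is to dispose of the eight groups one at a time, exhibiting for each a non-integral Cayley graph. Two reductions organize the work. First, Cayley integrality passes to quotients: if $N\trianglelefteq G$, $\pi\colon G\to G/N$ is the projection, and $\bar S\subseteq (G/N)\setminus\{1\}$ is inverse-closed, then two vertices of $Cay(G,\pi^{-1}(\bar S))$ are adjacent exactly when their $\pi$-images are adjacent in $Cay(G/N,\bar S)$, so its adjacency matrix is $A\otimes J_{|N|}$, where $A$ is that of $Cay(G/N,\bar S)$ and $J_{|N|}$ is the all-ones matrix; hence $\mathrm{spec}\,Cay(G,\pi^{-1}(\bar S))=\{|N|\mu:\mu\in\mathrm{spec}\,Cay(G/N,\bar S)\}\cup\{0,\dots,0\}$, and an irrational $\mu$ forces an irrational $|N|\mu$. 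Thus a group with a non-Cayley-integral quotient is not Cayley integral. Second (using the subgroup lemma), any witnessing $S$ must generate $G$, because every proper subgroup of each of the eight groups is abelian of exponent dividing $4$ or $6$, or is $Q_8$, hence is Cayley integral.

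Four of the cases then follow from earlier results. For $(4)$, $SL(2,3)/\langle -I\rangle\cong A_4$, so $(4)$ reduces to $(1)$. For $(6)$, imposing $x^y=x^{-1}$ on the given presentation of $(C_4\times C_2)\rtimes C_4$ makes all of its defining relations hold, exhibiting $C_4\rtimes C_4$ as a proper quotient, so $(6)$ reduces to $(2)$. For $(5)$, killing the central involution $b^2$ of the $C_4\ltimes C_3$ factor yields the quotient $S_3\times C_2\cong D_{12}$, which is not Cayley integral by Lemma \ref{D8}. For $(8)$, killing $z^2$ yields the generalized dihedral group of $C_3\times C_3$, a group of order $18$; its three reflections $xz,yz,xyz$ form an inverse-closed generating set, so the corresponding Cayley graph is connected and cubic and therefore non-integral by Theorem \ref{AV}, since no group of order $18$ occurs in that list. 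Case $(3)$ is handled in the same spirit directly: in $S_3\times C_3$ (order $18$) the set $\{((12),\omega),((12),\omega^{-1}),((13),1)\}$, with $\omega$ a generator of $C_3$, is inverse-closed, has three elements, and generates — $((12),\omega)^2=(1,\omega^{-1})$ recovers the $C_3$ factor — so Theorem \ref{AV} again applies.

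What remains are $(1)$ $A_4$, $(2)$ $C_4\rtimes C_4$, and $(7)$ the extraspecial group of order $27$ and exponent $3$, which I would treat by a direct spectral computation. For an arbitrary $S$, the Wedderburn decomposition of $\mathbb C[G]$ block-diagonalizes the adjacency matrix of $Cay(G,S)$, so its spectrum is the union, over irreducible representations $\rho$, of the eigenvalues of $M_\rho:=\sum_{s\in S}\rho(s)$ (each contributed with multiplicity $\dim\rho$), and $M_\rho$ is Hermitian when $S=S^{-1}$; this is the natural extension of Proposition \ref{BH1}. It suffices to produce, for each group, a generating inverse-closed $S$ and one $\rho$ for which $M_\rho$ has an irrational eigenvalue. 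For $A_4$ take $S=\{(123),(132),(12)(34),(13)(24)\}$ and $\rho$ the $3$-dimensional irreducible: $M_\rho$ has characteristic polynomial $(t+1)(t^2+t-4)$, with roots $\tfrac{-1\pm\sqrt{17}}{2}$. For $C_4\rtimes C_4$ take $S=\{xy,(xy)^{-1},y,y^{-1}\}$ and $\rho$ the $2$-dimensional irreducible $x\mapsto\left(\begin{smallmatrix}i&0\\0&-i\end{smallmatrix}\right)$, $y\mapsto\left(\begin{smallmatrix}0&1\\1&0\end{smallmatrix}\right)$: then $M_\rho=\left(\begin{smallmatrix}0&2+2i\\2-2i&0\end{smallmatrix}\right)$, with eigenvalues $\pm 2\sqrt2$. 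For the order-$27$ group take $S=\{x,x^{-1},y,y^{-1}\}$ and $\rho$ one of its $3$-dimensional irreducibles, say $x\mapsto\mathrm{diag}(1,\omega,\omega^2)$ and $y\mapsto$ the cyclic coordinate shift: then $M_\rho=\left(\begin{smallmatrix}1&1&1\\1&-2&1\\1&1&-2\end{smallmatrix}\right)$, with characteristic polynomial $(t+3)(3-t^2)$, hence eigenvalue $\sqrt3$. In each case one checks $S$ generates $G$, so $Cay(G,S)$ is connected and has an irrational eigenvalue.

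The main obstacle is precisely the search for these $S$ in $(1),(2),(7)$: each of these groups is "barely" non-Cayley-integral. All their proper subgroups and proper quotients are Cayley integral, so neither the subgroup lemma nor quotient-closure helps; every \emph{conjugation-invariant} inverse-closed $S$ yields an integral graph, since for such $S$ the eigenvalues $\theta_\chi=\frac1{\chi(1)}\sum_{s\in S}\chi(s)$ of Proposition \ref{BH1} all turn out to be integers here; and in $(2),(7)$ there is no $3$-element inverse-closed generating set at all (in $(7)$ there are no involutions, and in $(2)$ no involution together with an order-$4$ element generates), so Theorem \ref{AV} is unavailable. One is therefore forced to a $4$-element (or larger) non-symmetric generating set and to a genuine matrix eigenvalue computation inside a $2$- or $3$-dimensional irreducible, where the delicate point is verifying that the irrationality actually appears rather than cancelling — which is exactly why the obvious symmetric choice $\{x,x^{-1},y,y^{-1}\}$ works for the order-$27$ group but must be perturbed for $A_4$ and $C_4\rtimes C_4$.
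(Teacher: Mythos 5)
Your proposal is correct in substance but takes a genuinely different route from the paper. The paper treats all eight groups uniformly and computationally: for each one it exhibits a specific inverse-closed set $S$ and has {\sf GAP}/{\sf GRAPE} compute and factor the full characteristic polynomial of $Cay(G,S)$, reading off an irreducible quadratic factor. You instead introduce a structural tool the paper never states --- Cayley integrality passes to quotients, via the $A\otimes J_{|N|}$ identity for the pullback $\pi^{-1}(\bar S)$ --- and use it to collapse case (4) onto (1), case (6) onto (2), case (5) onto Lemma \ref{D8}, and case (8) onto the cubic classification of Theorem \ref{AV}; case (3) is also finished by Theorem \ref{AV} via a $3$-element inverse-closed generating set in a group of order $18$. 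Only (1), (2), (7) then require computation, and you replace the $|G|\times|G|$ characteristic polynomial by a $2\times 2$ or $3\times 3$ block $M_\rho=\sum_{s\in S}\rho(s)$ of the Wedderburn decomposition of $\mathbb{C}[G]$ --- a legitimate and standard sharpening of Proposition \ref{BH1} to sets $S$ that are not conjugation-invariant. The reductions all check out (in particular, adding $x^y=x^{-1}$ to the presentation in (6) does kill all the other commutator relators, and the three involutions $xz,yz,xyz$ do generate the order-$18$ quotient in (8)). What your approach buys is a hand-checkable proof with far smaller computations; what it costs is reliance on several explicitly written representations, which is where your one error occurs.

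Two corrections. First, in case (7) the displayed matrix is wrong: with $\rho(x)=\mathrm{diag}(1,\omega,\omega^2)$ one has $\rho(x)+\rho(x)^{-1}=\mathrm{diag}(2,-1,-1)$, so
$M_\rho=\left(\begin{smallmatrix}2&1&1\\1&-1&1\\1&1&-1\end{smallmatrix}\right)$
with characteristic polynomial $(t+2)(t^2-2t-2)$ and irrational eigenvalues $1\pm\sqrt3$ --- consistent with the factor $(x^2-2x-2)^6$ in the paper's own output for the same $S$ --- not $(t+3)(3-t^2)$ with eigenvalue $\sqrt3$. The conclusion survives, but the matrix and eigenvalues as written are incorrect. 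Second, the parenthetical claim that every proper subgroup of each of the eight groups is abelian or $Q_8$ is false (e.g.\ $(C_4\ltimes C_3)\times C_2$ contains $C_4\ltimes C_3$, and $S_3\times C_3$ contains $S_3$); the correct and sufficient assertion is that every proper subgroup is Cayley integral. This remark is only motivational and is not used in your argument, so nothing breaks.
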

\begin{proof}
For each of  groups (1)-(8)  we have found an inverse closed subset $S$ $(1\not\in S)$ such that $Cay(G,S)$ is not integral.
We have used the following codes in {\sf GAP} \cite{GAP} to obtain the spectra of these graphs.
The following code constructs the Cayley graph of $A_4$ on the set $S$. We use  {\sf GRAPE} package of Soicher \cite{S}.
\begin{verbatim}
LoadPackage("grape");
F:=FreeGroup(2);
x:=F.1; y:=F.2;
G:=F/[x^3,y^2,(x*y)^3]; #G=A_4
a:=G.1;
b:=G.2;
S:=[a,a^-1,b,a*b,b^-1*a^-1];
C:=CayleyGraph(G,S);
\end{verbatim}
By the function {\sf admat} \cite[p. 6]{AV} one can construct the adjacency matrix of a given graph by {\sf GRAPE} \cite{S}.
\begin{verbatim}
A:=admat(C,12);
\end{verbatim}
The following command computes the characteristic polynomial of the adjacency matrix of $Cay(A_4,S)$ and the second command factorizes the
 polynomial into irreducible ones.
\begin{verbatim}
P:=CharacteristicPolynomial(A);
FP:=Factors(P);
\end{verbatim}
It follows that the characteristic polynomial of $Cay(A_4,S)$ is as follows:
$$(x-5)(x+1)^{5}(x^{2}-5)^{3},$$
and so $Cay(A_4,S)$ is not integral showing that $A_4$ is not a Cayley integral group.
For the other groups $G$ in (2)-(8) we give a presentation of $G$ under which we introduce a subset $S$ for which $Cay(G,S)$ is not integral.
 Verifying that $Cay(G,S)$ is not integral can be done as above by {\sf GAP}. In each case, the factorized characteristic polynomial $P(x)$ into
 irreducibles is exhibited.\\
(2) \; $G=C_{4}\rtimes C_{4}=\langle x,y \;|\; x^4=y^4=1, x^y=x^{-1}  \rangle$,  $S=\{x,x^{-1},y,y^{-1},xy,y^{-1}x^{-1},xy^2,y^{-2}x^{-1} \}$ \\ and
$P(x)=(x-8)x^{9}(x+4)^{2}(x^{2}-8)^{2}$.\\
(3) \; $G=S_{3} \times C_{3}=\langle x,y,z \;|\; x^2=y^3=z^3=[x,z]=[y,z]=1, y^x=y^{-1}\rangle$, $S=\{x,y,y^{-1},z,z^{-1},zy^2,y^{-2}z^{-1}\}$\\
 and $P(x)=(x-7)(x-5)(x-1)^{4}(x+1)^{4}(x^2+3x-1)^{4}$. \\
(4) \; $G=SL(2,3)=\langle x,y \;|\; x^3=y^4=y^{-1}xyxy^{-1}x=x^{-1}y^{-1}(x^{-1}y)^2=(xy)^3=1 \rangle$, $S=\{x,x^{-1},y,y^{-1}\}$\\ and $P(x)=(x-4)(x-2)^4(x-1)^2(x+1)^8(x+3)^3(x^2-x-4)^3$.\\
(5) \;  $G=(C_{4}\ltimes C_{3}) \times C_{2}=\langle x,y,z \;|\; x^4=y^3=z^2=[x,z]=[y,z]=1,y^x=y^{-1} \rangle$,\\
 $S=\{x,x^{-1},y,y^{-1},z,xy,y^{-1}x^{-1},xz,z^{-1}x^{-1}\}$  and  \\ $P(x)=(x-9)(x-3)^{3}(x-1)^{2}x^{6}(x+1)(x+2)^{4}(x+3)(x+4)^{2}(x^{2}-12)^{2}$.\\
(6) \; $G=(C_4 \times C_2)\rtimes C_4=\langle x,y \;|\; x^4=y^4=[x,y]^2=[x^2,y]=[x,y^2]=1 \rangle$,\\ $S=\{x,x^{-1},y,y^{-1}\}$  and
$P(x)=(x-4)(x-2)^8x^{10}(x+2)^8(x+4)(x^2-8)^2$.\\
(7) \; $G=\langle x,y \;|\; x^3=y^3=(xy)^3=(xy^{-1})^3=1 \rangle$, $S=\{x,x^{-1},y,y^{-1}\}$ and \\ $P(x)=(x-4)(x-1)^4(x+2)^{10}(x^2-2x-2)^6$.\\
(8) \; $G=(C_3\times C_3) \rtimes C_4=\langle x,y,z \;|\; x^3=y^3=z^4=[x,y]=1,x^z=x^{-1},y^z=y^{-1} \rangle$, \\ $S=\{x,x^{-1},y,y^{-1},z,z^{-1},z^2,xy,(xy)^{-1},xz,(xz)^{-1},yz,(yz)^{-1} \}$\\ and $P(x)=(x-13)(x-5)^2(x-1)^5(x+1)^{12}(x+4)^4(x^2-2x-11)^4(x^2+4x-8)^2$.
\end{proof}
\begin{lem}\label{2-x}
Let $G$ be a finite  Cayley integral group and $x$ be any element of order $2$ of $G$ and $y$ be an arbitrary element of $G$.
 Then $\langle x,y \rangle$ is isomorphic to one of the groups:
$C_2$, $C_{2}^{2}$, $C_{4}$, $C_{6}$, $S_{3}$,  $C_{2} \times C_{4}$,  $C_{2}\times C_{6}$.
\end{lem}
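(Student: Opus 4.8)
The plan is to set $H=\langle x,y\rangle$. Since $G$ is Cayley integral, so is its subgroup $H$, and by Lemma~\ref{KS} the order of $y$ lies in $\{1,2,3,4,6\}$; I would then argue by cases according to $|y|$.

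The cases $|y|\le 2$ are handled directly. If $y=1$ then $H=\langle x\rangle\cong C_2$. If $|y|=2$, then $H$ is generated by the two involutions $x$ and $y$, hence is a dihedral group $D_{2n}$ with $n=|xy|$ (reading $D_2\cong C_2$ and $D_4\cong C_2^2$ in the degenerate situations $x=y$ and $xy=yx$). By Lemma~\ref{KS} we have $n\in\{1,2,3,4,6\}$, while Lemma~\ref{D8} excludes $n\ge 4$; so $H\cong C_2$, $C_2^2$ or $D_6\cong S_3$.

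For $|y|\in\{3,4,6\}$ I would invoke the cubic Cayley graph classification. The set $S=\{x,y,y^{-1}\}$ is inverse-closed, avoids $1$, generates $H$, and has exactly three elements ($y\neq y^{-1}$ since $|y|\ge 3$, and $x\neq y^{\pm 1}$ since $|x|=2\neq |y|$); hence $Cay(H,S)$ is a connected cubic integral Cayley graph, and Theorem~\ref{AV} forces $H$ to be one of the fourteen listed groups. Among those I keep only the ones possessing an element of order $|y|$ and then discard every group that is not Cayley integral: $D_8$ and $D_{12}$ go by Lemma~\ref{D8}; $A_4$ goes by Lemma~\ref{A4}(1), and then $S_4$, $A_4\times C_2$ and $D_8\times C_3$ go because they contain a copy of $A_4$ or of $D_8$; and $S_3\times C_4$ goes because it has an element of order $12$, contradicting Lemma~\ref{KS}. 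A short check then shows that what survives is $\{C_6,S_3\}$ when $|y|=3$ (the group $C_2\times C_6\cong C_2^2\times C_3$ that formally remains cannot be generated by an involution together with an element of order $3$, and in any case reappears below), $\{C_4,C_2\times C_4\}$ when $|y|=4$, and $\{C_6,C_2\times C_6\}$ when $|y|=6$. Taking the union of the outcomes over all five cases gives exactly $\{C_2,C_2^2,C_4,C_6,S_3,C_2\times C_4,C_2\times C_6\}$, as claimed.

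The substance of the argument is this last round of bookkeeping: running through the fourteen groups of Theorem~\ref{AV}, recording which element orders each of them realizes, and certifying that every unwanted group fails to be Cayley integral. The main obstacle is simply making sure the exclusion lemmas cover each case — in particular noticing that $S_3\times C_4$, which genuinely occurs in Theorem~\ref{AV}, must be thrown out not via an explicit non-integral Cayley graph but through the element-order obstruction of Lemma~\ref{KS}; once that is observed, everything else follows immediately from the preliminaries already in hand.
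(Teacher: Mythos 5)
Your proposal is correct and follows essentially the same route as the paper: handle $o(y)\le 2$ via the dihedral argument with Lemma \ref{D8}, and for $o(y)>2$ apply Theorem \ref{AV} to the connected cubic graph $Cay(\langle S\rangle,S)$ with $S=\{x,y,y^{-1}\}$, then eliminate the unwanted groups using Lemmas \ref{KS}, \ref{D8} and \ref{A4}. Your version is slightly more explicit about why $|S|=3$ and about the per-order bookkeeping, but the substance is identical.
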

\begin{proof}
If $y \in \langle x\rangle$, then $\langle x,y \rangle\cong C_2$. Suppose that $y\not\in \langle x\rangle$. If $o(y)=2$,
then $\langle x,y \rangle$ is a dihedral group of order at most $6$ by Lemma \ref{D8}. Thus in the latter case, $\langle x,y \rangle\cong C_2 \times C_2$
or $S_3$. Now, assume that $o(y)>2$.
Let $S=\{x,y,y^{-1}\}$. Then $Cay(\langle S\rangle,S)$ is a cubic integral Cayley graph. It follows from Theorem \ref{AV} that $\langle S\rangle$ is
isomorphic to one of the following groups:
 $C_{4}$, $C_{6}$, $S_{3}$, $C_{2} \times C_{4}$, $D_{8}$, $C_{2}\times C_{6}$, $D_{12}$, $A_{4}$, $S_{4}$, $D_{8} \times C_{3}$, $S_{3} \times C_{4}$, $A_{4}\times C_{2}$. It follows from Lemma \ref{KS} that $\langle S \rangle$ is isomorphic to one of following groups:
$C_{4}$, $C_{6}$, $S_3$, $C_{2} \times C_{4}$, $D_{8}$, $C_{2}\times C_{6}$, $D_{12}$, $A_{4}$, $S_{4}$, $A_{4}\times C_{2}$.
Since by Lemmas \ref{D8} and \ref{A4}, $G$ cannot have any subgroup isomorphic to $D_{2n}$ ($n\geq 4$) or $A_4$, $\langle S \rangle$ is
 isomorphic to one of following groups:
$C_{4}$, $C_{6}$, $C_{2} \times C_{4}$, $C_{2}\times C_{6}$, $S_3$. This completes the proof.
\end{proof}
\begin{lem}\label{2-4-6}
Let $G$ be a finite  Cayley integral group. Let $x$ be any element of order $2$ and $y\in G$ is of order $4$ or $6$. Then $xy=yx$.
\end{lem}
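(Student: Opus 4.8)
The plan is to reduce immediately to Lemma \ref{2-x}. Since $x$ has order $2$ and $y$ is an arbitrary element of the Cayley integral group $G$, that lemma tells us that $\langle x,y\rangle$ is isomorphic to one of $C_2$, $C_{2}^{2}$, $C_{4}$, $C_{6}$, $S_{3}$, $C_{2}\times C_{4}$, or $C_{2}\times C_{6}$. The only additional input needed is the hypothesis $o(y)\in\{4,6\}$: this forces $\langle x,y\rangle$ to contain an element of order $4$ or an element of order $6$, which rules out $C_2$, $C_{2}^{2}$, and $S_{3}$, none of which has an element of order $4$ or $6$. Hence $\langle x,y\rangle$ must be isomorphic to one of $C_{4}$, $C_{6}$, $C_{2}\times C_{4}$, $C_{2}\times C_{6}$.

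Each of these four groups is abelian, so $x$ and $y$ commute inside $\langle x,y\rangle$, and therefore $xy=yx$ in $G$, which is exactly the assertion.

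There is essentially no obstacle here: all of the real work has already been carried out in establishing Lemma \ref{2-x}, which in turn rests on the classification of connected cubic integral Cayley graphs (Theorem \ref{AV}) together with Lemmas \ref{D8}, \ref{KS}, and \ref{A4}. The present statement is just the observation that once the list of possible two-generator subgroups $\langle x,y\rangle$ with $x$ an involution has been cut down to abelian groups and $S_{3}$, the extra condition that $y$ have order $4$ or $6$ eliminates $S_{3}$ as well, leaving only abelian possibilities.
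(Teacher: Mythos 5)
Your proposal is correct and follows the same route as the paper: apply Lemma \ref{2-x}, note that all groups on its list except $S_3$ are abelian, and use $o(y)\in\{4,6\}$ to exclude $S_3$ (which has no element of order $4$ or $6$). Your version just spells out the list a bit more explicitly; the argument is identical.
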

\begin{proof}
It follows from Lemma \ref{2-x} that the subgroup $\langle x,y\rangle$ is abelian or isomorphic to $S_3$. Since $S_3$ has no element of order $4$ or $6$,
we are done.
\end{proof}
\begin{lem}\label{3inv}
Let $G$ be a finite non-abelian  Cayley integral group generated by three distinct elements of order $2$. Then $G\cong S_3$.
\end{lem}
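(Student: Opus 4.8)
The plan is to feed the natural generating set straight into the classification of connected cubic integral Cayley graphs. Write $G=\langle a,b,c\rangle$ with $a,b,c$ pairwise distinct elements of order $2$, and put $S=\{a,b,c\}$. Because each of $a,b,c$ is its own inverse we have $S=S^{-1}$, clearly $1\notin S$, and $|S|=3$ since the three elements are distinct; also $\langle S\rangle=G$. Hence $Cay(G,S)$ is a connected $3$-regular graph, and it is integral because $G$ is Cayley integral.

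First I would apply Theorem \ref{AV}: $G$ must be isomorphic to one of the fourteen groups listed there. Since $G$ is non-abelian, the only surviving candidates are $S_3$, $D_8$, $D_{12}$, $A_4$, $S_4$, $D_8\times C_3$, $S_3\times C_4$ and $A_4\times C_2$. Next I would knock out every candidate except $S_3$ using results already available. The groups $D_8$ and $D_{12}$ are ruled out by Lemma \ref{D8}. The group $A_4$ is ruled out by Lemma \ref{A4}, and since $S_4$ and $A_4\times C_2$ each contain a subgroup isomorphic to $A_4$, they are ruled out by the subgroup lemma; likewise $D_8\times C_3$ contains a copy of $D_8$ and is ruled out. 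Finally $S_3\times C_4$ has an element of order $12$, which is forbidden in a Cayley integral group by Lemma \ref{KS}. Only $S_3$ remains, so $G\cong S_3$.

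I do not expect a real obstacle here: the whole argument rests on observing that three distinct involutions generating $G$ form exactly a set of the type covered by Theorem \ref{AV}. The only point requiring a little attention is the bookkeeping, namely matching each non-abelian entry of the list with the result (Lemma \ref{D8}, Lemma \ref{A4}, or Lemma \ref{KS}, possibly via the subgroup lemma) that disqualifies it.
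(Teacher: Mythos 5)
Your proposal is correct and follows essentially the same route as the paper: take $S=\{a,b,c\}$, apply Theorem \ref{AV} to the cubic integral Cayley graph, and eliminate all non-abelian candidates except $S_3$ via Lemmas \ref{D8}, \ref{A4} and \ref{KS} (the paper applies the subgroup lemma implicitly for $D_8\times C_3$, $S_4$ and $A_4\times C_2$, exactly as you spell out).
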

\begin{proof}
Suppose that $G=\langle x,y,z \rangle$, where $x,y,z$ are all distinct and $o(x)=o(y)=o(z)=2$. Consider the cubic Cayley graph $\Gamma=Cay(G,\{x,y,z\})$. Since $G$ is non-abelian
and $\Gamma$ is integral, it follows from Theorem \ref{AV} that  $G$ is isomorphic to one of the following groups:
$S_3$, $D_{8}$, $D_{12}$, $A_{4}$, $S_{4}$, $D_{8} \times C_{3}$, $S_{3} \times C_{4}$, $A_{4}\times C_{2}$.\\
The groups $D_{8}$, $D_{12}$ and $D_{8} \times C_{3}$ are ruled out by Lemma \ref{D8} and the groups $A_{4}$, $S_{4}$ and $A_{4}\times C_{2}$ are
not possible by Lemma \ref{A4}. The group   $S_{3} \times C_{4}$  is not Cayley integral by Lemma \ref{KS}. It follows that $G\cong S_3$.
\end{proof}
\begin{lem} \label{A1}
Let $G$ be a finite $3$-group. Then $G$ is  Cayley integral if and only if $G$ is elementary abelian.
\end{lem}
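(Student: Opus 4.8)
The plan is to prove the two implications separately; the substance lies in the ``only if'' direction, while the ``if'' direction is a quick consequence of facts already on the table.

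\textbf{The ``if'' direction.} Suppose $G$ is elementary abelian. Then $G$ is an abelian group of exponent $3$, in particular of exponent dividing $6$, so $G$ is Cayley integral by the Klotz--Sander classification of finite abelian Cayley integral groups recalled in the Introduction. (Alternatively, one can argue directly from Proposition \ref{BH1}: for abelian $G$ every irreducible character $\chi$ is linear, so the eigenvalues of $Cay(G,S)$ are the numbers $\sum_{s\in S}\chi(s)$; since $G$ has exponent $3$ no non-identity element is its own inverse, so $S$ splits as a disjoint union of pairs $\{s,s^{-1}\}$, and $\chi(s)+\chi(s^{-1})=2\operatorname{Re}\chi(s)\in\{2,-1\}$, whence every eigenvalue is an integer.)

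\textbf{The ``only if'' direction.} Assume $G$ is a finite $3$-group which is Cayley integral. First I would invoke Lemma \ref{KS}: the order of every non-trivial element of $G$ lies in $\{2,3,4,6\}$. As $G$ is a $3$-group, this forces every non-trivial element to have order exactly $3$, so $G$ has exponent $3$. It therefore remains only to show that $G$ is abelian, because then $G$ is an abelian group of exponent $3$, i.e.\ elementary abelian. To prove $G$ is abelian I would argue by contradiction: if $G$ is non-abelian, choose $x,y\in G$ with $xy\neq yx$ and set $H=\langle x,y\rangle$, a non-abelian, two-generated group of exponent $3$. Here I would use the classical structure of the free group of exponent $3$ on two generators, the Burnside group $B(2,3)$: it has order $27$, is non-abelian, and, being a $p$-group with centre of order $3$, all of its proper homomorphic images are abelian. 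Since $H$ is a non-abelian quotient of $B(2,3)$, it follows that $H\cong B(2,3)$, which has the presentation $\langle x,y \mid x^3=y^3=(xy)^3=(xy^{-1})^3=1\rangle$ appearing in part $(7)$ of Lemma \ref{A4}. By that lemma $H$ is not Cayley integral, contradicting the fact that every subgroup of the Cayley integral group $G$ is again Cayley integral. Hence $G$ is abelian, and the proof is complete.

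\textbf{Expected obstacle.} There is essentially no serious obstacle: the proof is a chain of applications of Lemmas \ref{KS} and \ref{A4}(7) together with the hereditary property of Cayley integrality. The only ingredient that is not a verbatim citation is the standard group-theoretic fact that $B(2,3)$ has order $27$ and that all of its proper quotients are abelian (equivalently, that a non-abelian two-generated group of exponent $3$ must be exactly this order-$27$ group); I would record this with the short argument that any normal subgroup of order $3$ in a $p$-group is central, while the centre of a non-abelian group of order $27$ has order $3$.
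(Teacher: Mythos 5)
Your proof is correct and follows essentially the same route as the paper's: Lemma \ref{KS} forces exponent $3$, and a non-commuting pair would generate the order-$27$ exponent-$3$ group of Lemma \ref{A4}(7), contradicting the hereditary property of Cayley integrality. The only difference is that you make explicit (via $B(2,3)$) the standard fact that a two-generated non-abelian group of exponent $3$ has order $27$, which the paper uses implicitly.
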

\begin{proof}
If $G$ is an elementary abelian $3$-group, then $G\cong C_3^k$ for some integer $k\geq 0$. Now, it follows from \cite{KS} that $G$ is Cayley integral.\\
Now, assume that $G$ is a finite Cayley integral $3$-group.
By Lemma \ref{KS} the exponent of $G$ is $3$. Suppose, for a contradiction, that $G$ is non-abelian. Then $G$ has two non-commuting elements $x$ and $y$.
Thus $\langle x,y \rangle$ is the group of order $27$ and exponent $3$. This contradicts Lemma \ref{A4}. Thus $G$ is abelian of exponent $3$ which
means that $G$ is an elementary abelian $3$-group.
\end{proof}
\begin{thm}\label{S3}
Let $G$ be a finite  Cayley integral group. Then, there exist two non-commuting elements of order $2$ in $G$ if and only if $G\cong S_3$.
\end{thm}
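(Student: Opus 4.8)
\emph{The reverse implication is immediate:} $S_3$ is Cayley integral by \cite{KS}, and any two distinct transpositions in $S_3$ are non-commuting elements of order $2$. For the forward implication, the plan is as follows. Let $G$ be a finite Cayley integral group possessing non-commuting involutions $a,b$. Since $a^2=b^2=1$, the subgroup $H:=\langle a,b\rangle$ is dihedral, and it is non-abelian because $ab\ne ba$; as $H$ is again Cayley integral it cannot be $D_{2n}$ with $n\ge 4$ by Lemma~\ref{D8}, so $H\cong S_3$. It then remains to prove $G=H$, and I would do this by showing that every $g\in G\setminus H$ leads to a contradiction. By Lemma~\ref{KS} we have $o(g)\in\{2,3,4,6\}$, and I would treat these four values in turn; throughout put $t:=ab$, the element of order $3$ generating the unique subgroup of order $3$ of $H$.

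For $o(g)=2$: then $a,b,g$ are three distinct involutions generating the non-abelian Cayley integral group $\langle a,b,g\rangle$, which by Lemma~\ref{3inv} is isomorphic to $S_3$; hence it equals $H$ (both have order $6$), so $g\in H$ — a contradiction. For $o(g)\in\{4,6\}$: by Lemma~\ref{2-4-6}, $g$ commutes with $a$ and with $b$, so $\langle g\rangle$ is central in $K:=H\langle g\rangle=\langle a,b,g\rangle$. The cyclic subgroup $\langle g\rangle\cap H$ of $S_3$ has order $1$, $2$ or $3$. If it is trivial then $K\cong S_3\times C_{o(g)}$; but $S_3\times C_4$ has an element of order $12$ (excluded by Lemma~\ref{KS}) and $S_3\times C_6$ contains $S_3\times C_3$, which is not Cayley integral by Lemma~\ref{A4}(3) — a contradiction. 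If it is non-trivial then $|K|\in\{12,18\}$ and $K$ is non-abelian with $\langle g\rangle\le Z(K)$ of order $o(g)\ge 4$; this is impossible, since then $|K/Z(K)|\le 3$, forcing $K/Z(K)$ cyclic and $K$ abelian.

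The main case is $o(g)=3$. For an involution $x\in\{a,b\}$, the group $\langle x,g\rangle$ contains elements of orders $2$ and $3$, so by Lemma~\ref{2-x} it is $\cong C_6$ or $\cong S_3$; equivalently, $x$ either centralizes $g$ or inverts it, i.e.\ $xgx^{-1}=g^{-1}$. I would split according to how many of $a,b$ invert $g$. If neither does, then $g$ is central in $K:=H\langle g\rangle$, and since $g\notin H$ (so $\langle g\rangle\cap H=1$) we get $K\cong S_3\times C_3$, contradicting Lemma~\ref{A4}(3). If exactly one does — say $a$ centralizes $g$ and $b$ inverts it, the other case being symmetric — then conjugation by $t=ab$ sends $g$ to $a(bgb^{-1})a^{-1}=ag^{-1}a^{-1}=g^{-1}$; but as $t^3=1$, conjugation by $t$ induces an automorphism of $\langle g\rangle\cong C_3$ of order dividing $3$, hence the identity, so $g=g^{-1}$, contradicting $o(g)=3$. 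If both $a$ and $b$ invert $g$, then $t=ab$ centralizes $g$; since $g\notin\langle t\rangle$, the subgroup $N:=\langle t,g\rangle$ is $\cong C_3\times C_3$ and is inverted (hence normalized) by $a$, so $K:=\langle a,b,g\rangle=N\rtimes\langle a\rangle$ has order $18$. But $K$ is non-abelian and is generated by the three distinct involutions $a$, $ag$, $at$ (each of order $2$ because $a$ inverts both $g$ and $t$), so Lemma~\ref{3inv} would force $|K|=6$ — a contradiction. Having ruled out every $g\in G\setminus H$, we conclude $G=H\cong S_3$.

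The step I expect to require the most care is the last subcase of $o(g)=3$: the overgroup produced there is the generalized dihedral group of $C_3\times C_3$, of order $18$, which is not excluded directly by any of Lemmas~\ref{D8}, \ref{A4} or \ref{KS}, so the resolution hinges on recognizing it as generated by three involutions and invoking Lemma~\ref{3inv}. By comparison, the cases $o(g)\in\{4,6\}$ come down to elementary facts about groups of order $12$ and $18$, and $o(g)=2$ is handled at once by Lemma~\ref{3inv}.
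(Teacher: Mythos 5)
Your proof is correct; every case closes. The route differs from the paper's mainly in its organization. The paper first pins down the global structure: it shows via Lemma \ref{3inv} that $G$ has exactly three involutions, all inside $\langle x,y\rangle\cong S_3$ and pairwise non-commuting; it then excludes elements of order $4$ by observing that such an element $t$ would centralize all three involutions (Lemma \ref{2-4-6}) while $t^2$ is one of them, deduces $4\nmid |G|$ and hence $|G|=2\cdot 3^k$, and finishes with a Sylow argument: the Sylow $3$-subgroup is elementary abelian by Lemma \ref{A1}, and if it had order at least $9$ one would obtain either $S_3\times C_3$ (Lemma \ref{A4}) or a subgroup containing nine involutions. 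You instead argue element by element, showing each $g\notin H$ of order $2$, $3$, $4$ or $6$ is impossible; this trades the Sylow and involution-counting machinery (and Lemma \ref{A1}, which you never need) for the central-quotient argument in the order $4$ and $6$ cases and for recognizing the generalized dihedral group of order $18$ as generated by three involutions. That last configuration is exactly the obstruction the paper meets in its ``nine involutions'' subcase, and both resolutions ultimately rest on Lemma \ref{3inv} --- you invoke it directly on the order-$18$ subgroup, the paper against its global count of involutions. The two arguments share the same critical exclusions ($D_{2n}$ for $n\ge 4$, $S_3\times C_3$, and the centralize-or-invert dichotomy of Lemma \ref{2-x}); yours is somewhat more self-contained and elementary, the paper's somewhat shorter once the involution count is established.
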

\begin{proof}
Let $x,y\in G$ be two non-commuting elements of order $2$. Then it follows from Lemma \ref{2-x} that $\langle x,y \rangle\cong S_3$.
 Now, assume that $z$ is an element of order $2$ such that $z\not\in\{x,y\}$.  It follows from Lemma \ref{3inv} that $\langle x,y,z\rangle\cong S_3$.
 Therefore $z\in\langle x,y \rangle$. This means that all elements of order $2$ of $\langle x,y\rangle$ are exactly all elements of order $2$ of $G$.
  Thus $G$ has precisely  three elements $x,y,z$ of order $2$ and they are pairwise non-commuting.
Now, assume that, if possible, $G$ has an element $t$ of order $4$. Then by Lemma \ref{2-4-6} $t$ commutes with all $x$, $y$ and $z$ and so
 $t^2g=gt^2$ for all $g\in \{x,y,z\}$; this is a contradiction since $t^2\in\{x,y,z\}$.
Therefore $G$ has no element of order $4$. Now, if $G$ has a subgroup $K$ of order $4$, it must be isomorphic to $C_2 \times C_2$, a contradiction
as all elements of order $2$ of $G$ are pairwise non-commuting. Hence $4$ does not divide $|G|$ and so $|G|=2m$ for some odd integer $m$.
It follows from Lemma \ref{KS} that $m$ is a power of $3$. Let $M$ be a Sylow $3$-subgroup of $G$. By Lemma \ref{A1}  $M$ is elementary abelian.
Assume that $|M|\geq 9$. Thus $M$ has two elements $b_1$ and $b_2$ such that $\langle b_1,b_2\rangle=\langle b_1\rangle \times \langle b_2\rangle$.
Note that if $a\in G$ and $b \in G$ such that $o(a)=2$ and $o(b)=3$, it follows from Lemma \ref{2-x} that $b^a=b$ or $b^a=b^{-1}$ since $\langle a,b\rangle$ is
either abelian or isomorphic to $S_3$. Since $G$ has a subgroup isomorphic to $S_3$ (say $\langle x,y \rangle$),
 we may assume that $b_1^x=b_1^{-1}$. Thus $b_2^x=b_2$ or $b_2^x=b_2^{-1}$. If $b_2^x=b_2$, then $\langle x,b_1,b_2 \rangle\cong S_3 \times C_3$
 which is not possible by Lemma \ref{A4}. If $b_2^x=b_2^{-1}$, then $\langle x,b_1,b_2 \rangle$ has $9$ elements of order $2$
 which is a contradiction, since $G$ has only $3$ elements of order $2$.
 Thus $|M|=3$ and so $G\cong S_3$. \\
The converse is easy to verify.
\end{proof}
\begin{thm}\label{Q8}
Let $G$ be a finite non-abelian  $2$-group. Then $G$ is Cayley integral if and only if $G\cong Q_{8}\times C_{2}^{n}$ for some integer $n\geq 0$.
\end{thm}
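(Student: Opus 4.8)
The plan is to prove the two implications separately. That $Q_8\times C_2^n$ is Cayley integral I would deduce from Proposition \ref{DKMS}: it is well known that $Q_8\times C_2^n$ is a Dedekind group, so every one of its subgroups is normal. Hence, for an inverse-closed $S\subseteq G\setminus\{1\}$ with $G=Q_8\times C_2^n$, each block of $S$ — a singleton $\{s\}=\langle s\rangle\setminus\{1\}$ when $o(s)=2$, or a pair $\{s,s^{-1}\}=\langle s\rangle\setminus\langle s^2\rangle$ when $o(s)=4$ — lies in the Boolean algebra generated by the normal subgroups of $G$; therefore so does $S$, and $Cay(G,S)$ is integral.

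For the converse, let $G$ be a finite non-abelian Cayley integral $2$-group. I would first collect structural facts from the earlier lemmas. By Lemma \ref{KS} the exponent of $G$ divides $4$, hence equals $4$ since $G$ is non-abelian. As a $2$-group has no subgroup isomorphic to $S_3$, Lemmas \ref{2-x} and \ref{2-4-6} show that every involution of $G$ is central; hence the involutions together with $1$ form an elementary abelian central subgroup $\Omega_1(G)\le Z(G)$. Since every element has order $1$, $2$ or $4$, every square lies in $\Omega_1(G)$, so $G/\Omega_1(G)$ has exponent $2$, so $G$ is nilpotent of class $2$ with $G'\le\Omega_1(G)\le Z(G)$, both elementary abelian. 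In particular $a^2$ and $[a,b]$ are central for all $a,b\in G$.

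The crux — and the step I expect to be the main obstacle — is the claim that any two non-commuting $a,b\in G$ generate a copy of $Q_8$ (so in particular $a^2=b^2$). Because $G$ has class $2$, the subgroup $H=\langle a,b\rangle$ has $\Phi(H)=\langle a^2,b^2,[a,b]\rangle$, and being $2$-generated and non-cyclic it satisfies $|H|=4\,|\Phi(H)|\in\{8,16,32\}$. If $|H|=8$ then $H\cong Q_8$, the only other option $D_8$ being impossible since its involutions are not all central. If $|H|=16$, a short analysis of how $a^2,b^2,[a,b]$ coincide applies: when $[a,b]\in\{a^2,b^2\}$ one of $a,b$ is inverted by the other, so $H\cong C_4\rtimes C_4$; when $a^2=b^2\ne[a,b]$, replacing $b$ by $ab$ reduces to the previous case; and when $[a,b]=a^2b^2$ then $a^2\ne b^2$ and $(ab)^2=1$, so the central involution $ab$ lies in $Z(H)=\Phi(H)$, contradicting that $\{a,b\}$ is a minimal generating set. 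If $|H|=32$, then $a^2,b^2,[a,b]$ are independent and $H$ satisfies the defining relations $a^4=b^4=[a,b]^2=[a^2,b]=[a,b^2]=1$ of $(C_4\times C_2)\rtimes C_4$, so $H\cong(C_4\times C_2)\rtimes C_4$ by order. Since $C_4\rtimes C_4$ and $(C_4\times C_2)\rtimes C_4$ are not Cayley integral by (2) and (6) of Lemma \ref{A4}, only $|H|=8$ survives. The delicate point is exactly this small-group bookkeeping together with the identification of each candidate $H$ among the already-excluded groups.

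Granting the claim, the rest is a uniformization argument. Fix non-commuting $a,b$ and set $z:=a^2=b^2$. If $Z(G)$ had an element $d$ of order $4$, then $ad$ and $b$ would not commute, so by the claim $(ad)^2=b^2=z$; but $(ad)^2=a^2d^2=zd^2$, forcing $d^2=1$ — a contradiction. Hence $Z(G)=\Omega_1(G)$ is elementary abelian. Comparing an arbitrary non-central $g$ with $a$ or $b$ (or, if $g$ commutes with both, with $ga$ and $b$) forces $g^2=z$; then $(gh)^2=g^2h^2[h,g]=[h,g]$ for non-commuting $g,h$ forces $[g,h]=z$, so $G'=G^2=\langle z\rangle\cong C_2$. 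Now $G/Z(G)$ is an elementary abelian $\mathbb F_2$-space carrying the non-degenerate alternating commutator form $\beta$ whose associated quadratic form — given by squaring — takes the value $1$ at every non-zero vector; expanding $q(u+v)=q(u)+q(v)+\beta(u,v)$ at three independent vectors yields a contradiction, so $\dim_{\mathbb F_2}G/Z(G)\le 2$, and as $G$ is non-abelian, $G/Z(G)\cong C_2^2$. Thus $G=\langle a,b\rangle\,Z(G)$ with $\langle a,b\rangle\cong Q_8$ and $\langle a,b\rangle\cap Z(G)=\langle z\rangle$; choosing a complement $C\cong C_2^n$ to $\langle z\rangle$ in the elementary abelian group $Z(G)$, the central subgroup $C$ meets $\langle a,b\rangle$ trivially and $\langle a,b\rangle C=G$, so $G=\langle a,b\rangle\times C\cong Q_8\times C_2^n$, as desired.
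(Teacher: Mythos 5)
Your proof is correct, and both directions take a genuinely different route from the paper's. For sufficiency you invoke Proposition \ref{DKMS} (which the paper states but pointedly does not use in the sequel), decomposing an inverse-closed $S$ into the pieces $\langle s\rangle\setminus\{1\}$ and $\langle s\rangle\setminus\langle s^2\rangle$, each in the Boolean algebra of normal subgroups because $Q_8\times C_2^n$ is Dedekind; the paper instead observes that $S$ is automatically closed under conjugation and computes the spectrum via Proposition \ref{BH1}, using that all irreducible characters of $Q_8\times C_2^n$ are integer-valued. Your version is shorter; the paper's keeps Proposition \ref{DKMS} out of the logical chain. For necessity the paper proves that every subgroup is normal and quotes Dedekind--Baer, identifying the order-$16$ and order-$32$ candidates for the two-generated subgroup by a GAP search; you prove by hand that any two non-commuting elements generate $Q_8$ and then finish with the quadratic-form argument on $G/Z(G)$ and an explicit splitting $G=\langle a,b\rangle\times C$, avoiding Dedekind--Baer altogether. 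The middle of your argument (the case split $|H|\in\{8,16,32\}$ and the exclusion of $C_4\rtimes C_4$ and $(C_4\times C_2)\rtimes C_4$ via items (2) and (6) of Lemma \ref{A4}) runs parallel to the paper's, but with the computer search replaced by identifications via presentations. Two small points to tighten: in the order-$16$ subcase $[a,b]=a^2b^2$, the assertion $Z(H)=\Phi(H)$ deserves a word (if $Z(H)>\Phi(H)$ then $|H/Z(H)|\le 2$, so $H$ would be abelian) --- or, more simply, $ab$ is an involution, hence central in $G$ by your earlier step, so $a$ and $b=a^{-1}(ab)$ commute, an immediate contradiction; and in the order-$32$ case the phrase ``by order'' silently uses that the presented group $\langle x,y\mid x^4=y^4=[x,y]^2=[x^2,y]=[x,y^2]=1\rangle$ has order exactly $32$, a fact the paper only verifies by machine. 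Neither point is a gap, just a line each to add.
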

\begin{proof}
Suppose that $G$ is a finite non-abelian Cayley integral  $2$-group. If we prove that every subgroup of $G$ is normal in $G$,  it follows from a famous result of Dedekind-Baer (see Theorem 5.3.7 of \cite{R}) that $G\cong Q_{8}\times C_{2}^{n}$ for some integer $n\geq 0$. \\
To prove that every subgroup of $G$ is normal in $G$ it is enough to show that every cyclic subgroup of $G$ is normal.
Since $G$ is a $2$-group, every element of $G$ is of order $1$, $2$ or $4$ by Lemma \ref{KS}. Thus every cyclic subgroup of $G$ is either of order $1$, $2$ or $4$.  Every element of order $2$ belongs to the center of $G$, this follows from Lemma \ref{2-x} and so every (cyclic) subgroup of order $2$ is normal in $G$. Therefore, it remains to  prove that $\langle x\rangle\trianglelefteq G$ for all elements $x$ of order $4$. \\
Suppose in contrary that there exists an element $a$ of order 4 such that $\langle a\rangle$ is not normal in $G$. Thus there exists an element $g$ of $G$ such that $g^{-1}ag \notin \seq{a}$. Consider the subgroup $H=\seq{a,g}$ of $G$. Clearly  $H$ is non-abelian. The order of $g$ is not $1$ or $2$, otherwise $a^g=a$ since elements of order $2$ are central in $G$.   Therefore the order of $g$ is $4$. Now, we investigate group properties of $H$.\\
(1) \; $H=\langle a,g\rangle$ is of exponent $4$, $o(a)=o(g)=4$ and $g^{-1}ag \notin \seq{a}$. \\
(2) \; All elements of order $2$ of $H$ are in the center $Z(H)$ of $H$. This follows from Lemma \ref{2-x}.\\
(3) \;  $H/Z(H)$ is of exponent $2$. If $x\in H$, then $x$ is of order $1,2$ or $4$ by the property (1). Thus $o(x^2)\in\{1,2\}$ and so $x^2\in Z(H)$ by the property (2). This proves that $H/Z(H)$ is of exponent $2$.\\
(4) \;   $H$ is nilpotent of class $2$. By the property (3), the derived subgroup $H'$ of $H$ is contained in $Z(H)$ and so $H$ is nilpotent of class $2$.\\
(5) \; $H'=\langle [a,g] \rangle$ is of order $2$. It is because that every commutator in $H$ is central by the property (4) and since $H$ is generated by two elements $a$ and $g$, we have $H'=\langle [a,g] \rangle$. Now, by the property (3),  $[a,g]^2=[a^2,g]=1$ and so $|H'|=2$.\\
(6) \; The factor group  $H/H'$ is isomorphic to $C_{4}\times C_{4}$, $C_{2}\times C_{4}$, or $C_{2}\times C_{2}$.
This is because $H/H'$ is an abelian group generated by two elements $aH$ and $gH$ which are both of orders dividing $4$ and
we note that $H/H'$ cannot be cyclic otherwise $H$ is also cyclic. \\
It follows that the order of $H$ is 8, 16 or 32.\\
 If the order of $H$ is $8$, then $H$ is isomorphic to $D_{8}$ or $Q_{8}$; Since by Lemma \ref{D8}, $D_{8}$ is not an integral group, $H\not \cong D_8$.
  Every subgroup of $Q_{8}$ is normal and so $g^{-1}ag \in \seq{a}$ if $H\cong Q_8$. Therefore by the property (1), $H\not\cong Q_8$.\\
  Thus $|H|\in \{16,32\}$. Now, suppose that the order of $H$ is 16. By the following code written in {\sf GAP} \cite{GAP},
  one can see what are  groups $H$ of order $16$ satisfying the properties (1)-(6).
 \begin{verbatim}
 a:=AllSmallGroups(16,IsAbelian,false);
 b:=Filtered(a,i->Size(DerivedSubgroup(i))=2);
 c:=Filtered(b,i->IdSmallGroup(FactorGroup(i,DerivedSubgroup(i)))=[8,2]);
 d:=Filtered(c,i->IsSubgroup(Center(i),Subgroup(i,Filtered(i,j->Order(j)=2)))=true);
 e:=Filtered(d,i->Exponent(i)=4);
\end{verbatim}
 The list {\tt e} contains only one group  isomorphic to $C_{4}\rtimes C_{4}=\langle x,y \;|\; x^4=y^4=1,x^y=x^{-1} \rangle$. Now, Lemma \ref{A4} implies that $H$   cannot be isomorphic to $C_{4}\rtimes C_{4}$ and so $|H|\not=16$. \\
Now, assume that $|H|=32$. By a similar code as above one can see that there is only one group $N$ satisfying the properties (1)-(6). The group $N$ is isomorphic to $(C_4 \times C_2)\ltimes C_4=\langle x,y \;|\; x^4=y^4=[x,y]^2=[x^2,y]=[x,y^2]=1 \rangle$ which is not a Cayley integral group by Lemma \ref{A4}. Thus $|H|\not=32$. This completes the proof in this direction.\\

Now, let us to prove that $T_n=Q_{8}\times C_{2}^{n}$ is a Cayley integral group for all integers $n\geq 0$.
We first prove that  the conjugacy class $a^{T_n}=\{a^g \;|\; g\in T_n\}$ of any element $a\in T_n$ is equal to $\{a\}$ or $\{a,a^{-1}\}$. For, if $o(a)=2$, then $a$ is central in $T_n$ and so $a^{T_n}=\{a\}$; and if $o(a)=4$, then $a=xt$, where $x\in Q_8$ and $t\in C_2^n$.
Let $g=ys$ be any element of $T_n$ such that $y\in Q_8$ and $s\in C_2^n$. We have $a^g=x^yt$ and since $x^y\in \{x,x^{-1}\}$ (it is easy to check that the latter is valid in $Q_8$) it follows that $a^g\in\{a,a^{-1}\}$.\\
It follows that if $S$ is an inverse closed subset of $T_n$ (not containing $1$), $S$ is also closed under conjugation of elements of $T_n$. Now, consider the Cayley graph $Cay(T_n,S)$.  Proposition \ref{BH1} implies that for each irreducible character $\chi$ of $T_n$,   $\frac{1}{\chi(1)}\sum_{s\in S}\chi(s)$
is an eigenvalue $\theta_{\chi}$ of $Cay(T_n,S)$ of multiplicity $\chi(1)^2$. As we mentioned in the paragraph after the statement of Proposition \ref{BH1} the multiset $\{\theta_{\chi} \;|\; \chi \in \text{Irr}(T_n)\}$ is the spectrum of $Cay(T_n,S)$. Now, since $\chi(g)\in \mathbb{Z}$ for all $g\in T_n$ (because all irreducible characters of $Q_8$ or $C_2$ have integer values and an irreducible character of $T_n$ is a tensor product of irreducible characters of $Q_n$ and $C_2$'s), $\sum_{s\in S}\chi(s)\in \mathbb{Z}$. Since $\chi(1)$ is an integer, it follows that $\theta_{\chi}\in\mathbb{Q}$ and so $\theta_{\chi}\in \mathbb{Z}$ as each eigenvalue of the adjacency matrix of a graph is an algebraic integer. Hence $Cay(T_n,S)$ is integral and so $T_n$ is a Cayley integral group.
\end{proof}
It should be mentioned that the subsets $S$ of $Q_8 \times C_p^{d}$ ($p$ a prime) for which the Cayley graph $Cay(Q_8 \times C_p^d,S)$ is integral are studied in the last section of \cite{DKMS}. We cannot derive the above result from  discussions in \cite{DKMS}.

\section {\bf Proof of the main Theorem}
In this section we prove the main theorem.

{\noindent \bf Proof of Theorem \ref{main}}.\\
Let $G$ be a finite non-abelian Cayley integral group.
By Lemma \ref{A1} and Theorem \ref{Q8}, we may assume that $6$ divides the order of $G$. By Theorem \ref{S3}, we may assume that all elements of order $2$  of $G$ pairwise commute. If $x$ is an arbitrary element of order 3, and $y$ is any element of order 2, the subgroup $\seq{x,y}$ must be abelian, otherwise it is isomorphic to $S_3$ by Lemma \ref{2-x}, and $S_3$ has non-commuting elements of orders 2, a contradiction. Now, Lemmas  \ref{KS} and \ref{2-4-6} imply that  elements of order 2 lie in the center of $G$. If $G$ has no elements of order $4$, then $G=P\times Q$, where $P$ is the Sylow $2$-subgroup of $G$ and $Q$ is the Sylow $3$-subgroup of $G$; this is because $P$ is a central subgroup of $G$. Now, Lemma \ref{A1} implies that $G\cong C_2^k \times C_3^\ell$ for some positive integers $k$ and $\ell$ and so $G$ is abelian, a contradiction. Thus we may assume that $G$ has an element of order $4$.\\
Now, let $a$ and $b$ are two elements of $G$ of order $4$ and $3$, respectively. By Lemma \ref{KS}  $ab\not=ba$ and we claim that $b^a=b^{-1}$. To prove the latter, it is enough to show that $K=\langle a,b\rangle\cong C_4\ltimes C_3=\langle x,y \;|\; x^4=y^3=1,y^x=y^{-1} \rangle$. We need to note some group properties of $K$ as follows:\\
(1) \; $K$ is a  finite non-abelian Cayley integral group having two elements of orders $3$ and $4$.\\
(2) \; The set of element orders of $K$ is contained in $\{1,2,3,4,6\}$. \\
(3) \; All elements of order $2$ of $K$ lie in the center of $K$. \\
By Von Dyck's theorem, there exists an epimorphism from some $G_{i,j}$ ($i,j\in\{2,3,4,6\}$) onto $G$, where
$$G_{i,j}=\langle x,y |x^4=y^3=[x^2,y]=(xy)^i=[x,y]^j=[(xy)^{\frac{\epsilon(i)i}{2}},x]=
[(xy)^{\frac{\epsilon(i)i}{2}},y]=[[x,y]^{\frac{\epsilon(j)j}{2}},x]=[[x,y]^{\frac{\epsilon(j)j}{2}},y]=1 \rangle,$$
 and $\epsilon(\ell)=\begin{cases} 1 & \text{if} \; \ell \; \text{is even} \\ 0 & \text{if} \; \ell \; \text{is odd}\end{cases}$.
Therefore the group $K$ is isomorphic to a quotient of some $G_{i,j}$.
All groups $G_{i,j}$ are finite and can be easily computed by {\sf GAP} \cite{GAP}.
Hence we need to study  quotients of  $G_{i,j}$. Using the following code in {\sf GAP} \cite{GAP}, one can find all possible quotients (satisfying properties (1)-(3) above) of $G_{i,j}$ which can be isomorphic to $K$. Note that the following code is for $G_{4,6}$.
\begin{verbatim}
f:=FreeGroup(2);;
x:=f.1;;y:=f.2;;
G46:=f/[x^4,y^3,Comm(x^2,y),(x*y)^4,Comm(x,y)^6,Comm((x*y)^2,x), Comm((x*y)^2,y),
                         Comm(Comm(x,y)^3,x),Comm(Comm(x,y)^3,y)];;
N:=NormalSubgroups(G46);;
T:=List(N,i->FactorGroup(G46,i));;
Tn:=Filtered(T,i->IsAbelian(i)=false);;
Tn1:=Filtered(Tn,i->exponent(i)=12);;
\end{verbatim}
These possible quotients are $SL(2,3)$, $C_2 \times SL(2,3)$ or $C_{4}\ltimes C_{3}$. Now, Lemma \ref{A4} implies that $K=C_{4}\ltimes C_{3}$ as we claimed.\\
Now, let $P$ be a Sylow $2$-subgroup of $G$. Suppose for a contradiction that  $P$ is non-abelian. Then Theorem \ref{Q8} implies that $P$ has a subgroup $E$ isomorphic to the quaternion group $Q_8$ of order $8$. Consider two elements $a$ and $a'$ of order $4$ in $E$ such that $o(aa')=4$ (take famous $\mathbf{i}$, $\mathbf{j}$ and $\mathbf{k}$ in $Q_8$) and let $b$ be any element of order $3$ in $G$. Since $b^a=b^{-1}$ and $b^{a'}=b^{-1}$, it follows that $b^{aa'}=(b^{a})^{a'}=b$ and since $o(aa')=4$, $b^{aa'}=b^{-1}$. Hence $b=b^{-1}$, a contradiction. Therefore Sylow $2$-subgroups of $G$ are abelian.\\
Now, suppose for a contradiction  that $P$ has an element $a$ of order $4$ and an element $t$ of order $2$ such that $\langle a,t \rangle=\langle a\rangle \times \langle t\rangle$. Take any element $b$ of order $3$ in $G$ and consider the group $M=\langle a,b,t \rangle$. The group $M$ is isomorphic to  $(C_4\ltimes C_3)\times C_2$ which is listed in Lemma \ref{A4} as a group that is not Cayley integral. It follows that $P$ is cyclic of order $4$.
Now, let $Q$ be a Sylow $3$-subgroup of $G$. Suppose for a contradiction that $|Q|\geq 9$. Then $Q$ has two elements $b$ and $b'$ of order $3$ such that
$\langle b,b' \rangle =\langle b\rangle \times \langle b' \rangle$. Since $b^a=b^{-1}$ and ${b'}^a={b'}^{-1}$, $L=\langle b,b',a \rangle=\langle a \rangle \ltimes \langle b,b' \rangle$, it follows that the group $L$ is the group number (8) listed in Lemma \ref{A4} which is not Cayley integral. Therefore $Q$ is of order $3$ and so $G\cong \langle x,y \;|\; x^3=y^4=1, x^y=x^{-1}\rangle$. This completes  the proof. $\hfil \Box$


\section*{\bf Acknowledgements}
 The authors thank the Graduate Studies of University of Isfahan. This research was partially supported by the Center of Excellence for Mathematics, University of Isfahan.


\end{document}